\documentclass[a4paper,11pt,twoside]{article}
\usepackage{pst-fill,pst-grad}
\usepackage{textcomp}
\usepackage[english]{babel}
\usepackage{graphicx}
\usepackage{amsmath}
\usepackage{float}
\usepackage[matrix,arrow,curve]{xy}
\usepackage{pstricks}
\usepackage{mathtools}
\usepackage{amsmath,amsfonts,verbatim,afterpage,theorem,euscript,mathrsfs,amssymb}
\usepackage{amsfonts}
\usepackage{amssymb}
\usepackage{array}
\usepackage{dsfont}
\usepackage{hyperref}
\def \vu{\vec{u}}

\def \vf{\vec{f}}
\def \div{\mathrm{div}}
\def \vn{\vec{\nabla}}
\def \div{\mathrm{div}}

\def \vvU{\vec{\mathcal{U}}}
\def \vvF{\vec{\mathcal{F}}}

\DeclareFontFamily{U}{mathx}{}
\DeclareFontShape{U}{mathx}{m}{n}{<-> mathx10}{}
\DeclareSymbolFont{mathx}{U}{mathx}{m}{n}
\DeclareMathAccent{\widehat}{0}{mathx}{"70}
\DeclareMathAccent{\widecheck}{0}{mathx}{"71}

\newtheorem{Theoreme}{Theorem}

\newtheorem{Definition}{Definition}[section]
\newtheorem{Proposition}{Proposition}[section]
\newtheorem{Lemme}{Lemma}[section]

\newtheorem{Remarque}{Remark}[section]

\numberwithin{equation}{section}
\title{\bf Global Mild solutions for the fractional Navier-Stokes-Boussinesq equations in critical Fourier-Herz-Lorentz spaces}
\author{Diego Chamorro\footnote{Laboratoire de Math\'ematiques et Mod\'elisation d'Evry (LaMME) - UMR 8071. Universit\'e d'Evry Val d'Essonne, 23 Boulevard de France, 91037 Evry Cedex, France. email: \textit{diego.chamorro@univ-evry.fr}},  Maxence Mansais\footnote{Laboratoire de Math\'ematiques et Mod\'elisation d'Evry (LaMME) - UMR 8071. Universit\'e d'Evry Val d'Essonne, 23 Boulevard de France, 91037 Evry Cedex, France. email: \emph{maxence.mansais@ens-paris-saclay.fr}}.}

\begin{document}
%%%%%%%%%%%%%%%%%%%%%%%%%%%%%%%%%%%%%%%%%%%%%%%%%%%
\maketitle
%%%%%%%%%%%%%%%%%%%%%%%%%%%%%%%%%%%%%%%%%%%%%%%%%%%
\begin{abstract}
\noindent We construct here global in time mild solutions to the forced fractional Navier-Stokes-Boussinesq system in some critical Fourier-Herz spaces which will be based on Lorentz norms in the time and the frequency variables. For the initial data and for the external force we will consider Fourier-Besov spaces that will also be based on Lorentz spaces. Moreover, we will show, by performing a separate study of the velocity field and the temperature, that it is possible to consider one of the largest functional space -in the Fourier-Besov-Lorentz framework- for the initial velocity field. 
\end{abstract}
%%%%%%%%%%%%%%%%%%%%%%%%%%%%%%%%%%%%%%%%%%%%%%%%%%%
%\tableofcontents
%%%%%%%%%%%%%%%%%%%%%%%%%%%%%%%%%%%%%%%%%%%%%%%%%%%%%%%%%%%%%%%%%%%%%%%%%%%%%%%%%%%%%%%%%%%%%%%%%%%%%%
\section{Introduction}
%%%%%%%%%%%%%%%%%%%%%%%%%%%%%%%%%%%%%%%%%%%%%%%%%%%
We study in this article the following fractional incompressible, Navier-Stokes-Boussinesq system for a fixed dimension $d\geq2$:
\begin{equation}\label{eq_NSBalpha_Intro}
\begin{cases}
\partial_t\vec{u} = -(-\Delta)^\frac{\alpha}{2}\vec{u}-\mathrm{div}(\vec{u}\otimes \vec{u}) -\vec{\nabla}p + (\theta \vec{e_d})+\vec{f}, \quad t>0,x\in \mathbb{R}^d, \quad \mathrm{div}(\vec{u})=0,\\[4mm]
\partial_t\theta= -(-\Delta)^\frac{\alpha}{2}\theta-\mathrm{div}(\theta\vec{u}), \quad t>0, x\in \mathbb{R}^d\\[4mm]
\vec{u}(0,x)=\vu_0(x), \mathrm{div}(\vu_0)=0, \qquad \theta(0,x)=\theta_0(x), \quad x\in \mathbb{R}^d.
\end{cases}
\end{equation}
For the fractional powers of the Laplacian, which can be defined by the expression in the Fourier level $\widehat{\;(-\Delta)^{\frac{\alpha}{2}}\phi\;}(\xi)=|\xi|^\alpha\widehat{\phi}(\xi)$, we will consider the case $1<\alpha< 2$.\\

This system of coupled equations models the behaviour of geophysical fluids, indeed, the first line corresponds to the incompressible fractional Navier-Stokes system, where $\vu:[0,+\infty[\times \mathbb{R}^d\longrightarrow \mathbb{R}^d$ denotes the velocity of the fluid, $p:[0,+\infty[\times\mathbb{R}^d\longrightarrow \mathbb{R}$ is the internal pressure of the fluid, and $\vu_0:\mathbb{R}^d\longrightarrow \mathbb{R}^d$ its given divergence-free initial velocity. Here the function $\theta:[0,+\infty[\times \mathbb{R}^d\longrightarrow \mathbb{R}$ is the temperature and $\vf:[0,+\infty[\times \mathbb{R}^d\longrightarrow \mathbb{R}^d$ is a given external force. The second line gives the evolution of the temperature $\theta$ from a given initial temperature $\theta_0$ and its evolution is governed by a fractional diffusion and by the drift $\vu$. For more details on the modeling provided by the equations (\ref{eq_NSBalpha_Intro}), please refer to the books \cite{Pedlosky} or \cite{Salmon}.\\

In the equations above, the unknowns are the velocity field $\vu$, the pressure $p$ and the temperature $\theta$, however, by formally applying the divergence to the first equation of (\ref{eq_NSBalpha_Intro}) we obtain the equation
$$\Delta p=-\div \big(\div (\vu\otimes \vu) +\theta \vec{e}_d+\vf\big),$$
which will allow us to concentrate our study on the variables $\vu$ and $\theta$. In order to do so, we will use the Leray projector given by the expression $\mathbb{P}(\vec{\phi})=\vec{\phi}+\nabla (-\Delta)^{-1}\mathrm{div}(\vec{\phi})$, note in particular that we have $\mathbb{P}(\vn p)=0$ and $\mathbb{P}(\vu)=\vu$ since $\div(\vu)=0$, thus, if we apply the Leray projector to the previous system we obtain
\begin{equation}\label{eq_NSBalpha}
\begin{cases}
\partial_t\vec{u} = -(-\Delta)^\frac{\alpha}{2}\vec{u}-\mathbb{P}\mathrm{div}(\vec{u}\otimes \vec{u}) + \mathbb{P}(\theta \vec{e_d})+\mathbb{P}(\vec{f}), \quad t>0,x\in \mathbb{R}^d,\\[4mm]
\partial_t\theta= -(-\Delta)^\frac{\alpha}{2}\theta-\mathrm{div}(\theta\vec{u}), \quad t>0, x\in \mathbb{R}^d,\\[4mm]
\vec{u}(0,x)=\vu_0(x), \mathrm{div}(\vu_0)=0, \qquad \theta(0,x)=\theta_0(x).
\end{cases}
\end{equation}
This system of equations possesses different classes of symmetries, one of the main ones being the scaling symmetry. Indeed, observe that if $(\vec{u},\theta)$ solves (\ref{eq_NSBalpha}) for some initial data $(\vu_0,\theta_0,\vec{f})$, then for all $\lambda>0$, the rescaled variables $\vec{u}_{\lambda}(t,x)=\lambda^{\alpha-1}\vec{u}(\lambda^\alpha t,\lambda x)$ and $\theta_\lambda(t,x)=\lambda^{2\alpha-1}\theta(\lambda^\alpha t,\lambda x)$ solve the same system for the initial data $(\vu_{0,\lambda},{\theta_0}_\lambda,\vec{f}_\lambda)$ given by the rescaled functions
$$\vu_{0,\lambda}(t,x)=\lambda^{\alpha-1}\vu_0(\lambda x), \quad {\theta_0}_\lambda(t,x)=\lambda^{2\alpha-1}\theta_0(\lambda x), \quad \vec{f}_\lambda(t,x)=\lambda^{2\alpha-1}\vec{f}(\lambda^\alpha t,\lambda x).$$
This yields a natural definition of critical functional spaces, which will be useful to construct global in time solutions to the system (\ref{eq_NSBalpha}). Indeed, using Duhamel's formula, these equations can be rewritten in the following form
\begin{equation}\label{eq_Ptfix}
\begin{cases}
\displaystyle{\vec{u} = \mathfrak{p}_t* \vu_0-\int_0^t \mathfrak{p}_{t-s}*\mathbb{P}\left(\mathrm{div}(\vec{u}\otimes \vec{u})\right)ds+\int_0^t \mathfrak{p}_{t-s}*\mathbb{P}(\theta \vec{e}_d)ds + \int_0^t \mathfrak{p}_{t-s}*\mathbb{P}(\vec{f})ds},\\[4mm]
\displaystyle{\theta =\mathfrak{p}_t* \theta_0-\int_0^t \mathfrak{p}_{t-s}* \mathrm{div}(\theta \vec{u})ds},
\end{cases}
\end{equation}
and in order to solve (\ref{eq_Ptfix}) via a fixed point argument, we will consider a \emph{critical} functional framework given by Banach spaces such that we have the following invariances with respect to the dilations given above: $\|\vec{u}(\cdot, \cdot)\|_{E_{\vu}}=\|\lambda^{\alpha-1}\vec{u}(\lambda^\alpha \cdot,\lambda \cdot)\|_{E_{\vu}}$ for the velocity field, $\|\theta(\cdot, \cdot)\|_{E_{\theta}}=\|\lambda^{2\alpha-1}\theta(\lambda^\alpha \cdot,\lambda \cdot)\|_{E_{\theta}}$ for the temperature, $\|\vec{f}(\cdot, \cdot)\|_{E_{\vf}}=\|\lambda^{2\alpha-1}\vec{f}(\lambda^\alpha \cdot,\lambda \cdot)\|_{E_{\vf}}$ for the external force, $\|\vu_0(\cdot)\|_{\mathcal{E}_{\vu_0}}=\|\lambda^{\alpha-1}\vu_0(\lambda \cdot)\|_{\mathcal{E}_{\vu_0}}$ for the initial velocity and $\|\theta_0(\cdot)\|_{\mathcal{E}_{\theta_0}}=\|\lambda^{2\alpha-1}\theta_0(\lambda \cdot)\|_{\mathcal{E}_{\theta_0}}$ for the initial temperature. Thus, with such functional spaces we can apply the classical Banach contraction principle to obtain global in time solutions by proving suitable estimates on the initial data $\vu_0, \theta_0, \vf$, the linear term given by $\mathbb{P}(\theta \vec{e_d})$, as well as the quadratic terms defined by $\mathbb{P}(\mathrm{div}(\vec{u}\otimes \vec{u}))$ and $\mathrm{div}(\theta \vec{u})$.\\

Solutions to the problem (\ref{eq_NSBalpha_Intro}) (or some of its variants) and their properties were studied in \cite{BrandoleseSchonbek}, \cite{DanchinPaicu}, \cite{HmidiKR}, \cite{HmidiZerguine}, \cite{JiuMiao}, \cite{Sawada}, \cite{Stefanov}, \cite{YangJ} and \cite{YeXuXue}. In particular, global in time mild solutions for the problem (\ref{eq_Ptfix}) were obtained in our recent work \cite{Chamorro_Mansais1} where we used parabolic Morrey spaces.\\ 

In this article we are going to work in the Fourier level for the variables $\vec{u}$ and $\theta$ and this will lead us to a very different functional framework. Throughout this article, given a $\vec{\phi}$ function (either in the $x\in \mathbb{R}^d$ variable or in the $(t,x)\in [0,+\infty[\times \mathbb{R}^d$ variable) we will write by $\widehat{\vec{\phi}}$ the Fourier transform of $\vec{\phi}$ only taken in the $x$ variable and the frequency variable shall be always denoted as $\xi \in \mathbb{R}^d$. \\

We introduce now the following functional spaces that will be used here for the initial data: 
%%%%%%%%%%%%%%%%%%%%%%%%%%%%%%%%%%%%%%%%%%%%%%%%%%%
\begin{Definition}[Fourier-Besov-Lorentz spaces]\label{def_Fourier-Besov-Lorentz}
For $\mathfrak{s}\in \mathbb{R}$ and $1\leq p,q\leq +\infty$, we define the homogeneous Fourier-Besov-Lorentz space $\dot{\mathscr{B}}^{\mathfrak{s},p}_{L^{q,\infty}_\xi}(\mathbb{R}^d)$ as the space of all the tempered distributions $\vec{\psi}:\mathbb{R}^d\longrightarrow \mathbb{R}^d$ (modulo the polynomials, \emph{i.e.} $\vec{\psi}\in  \mathcal{S}'/ \mathcal{P}$) such that the following norm is finite
\begin{equation}\label{NormFBL}
\|\vec{\psi}\|_{\dot{\mathscr{B}}^{\mathfrak{s},p}_{L^{q,\infty}_\xi}}=\left(\displaystyle{\sum_{j\in \mathbb{Z}}\left(2^{j\mathfrak{s}}\left\|\mathds{1}_{\{2^j\leq |\xi| \leq 2^{j+1}\}}\widehat{\;\vec{\psi}\;}(\cdot)\right\|_{L^{q,\infty}}\right)^p}\right)^\frac{1}{p} \quad \textrm{if } 1\leq p<+\infty,
\end{equation}
with the usual modifications when $p=+\infty$ and where $L^{q,\infty}(\mathbb{R}^d)$ is the usual Lorentz space given by the quantity $\|\vec{\psi}\|_{L^{q,\infty}}=\underset{\lambda>0}{\sup}\left\{\lambda \left|\{x: |\vec{\psi}(x)|>\lambda\}\right|^{\frac{1}{q}}\right\}$ when $1<q<+\infty$, with the convention $L^{\infty, \infty}(\mathbb{R}^d)=L^\infty(\mathbb{R}^d)$ when $q=+\infty$.\\

\noindent We will also consider the space $\dot{\mathscr{B}}^{\mathfrak{s},p}_{L^{q}_\xi}(\mathbb{R}^d)$ for $1\leq q<+\infty$ where the Lorentz space $L^{q,\infty}_\xi(\mathbb{R}^d)$ is replaced by the usual Lebesgue space $L^q_\xi(\mathbb{R}^d)$ in the expression (\ref{NormFBL}) above.
\end{Definition}
\noindent We will also need the following time and space functional framework (the following spaces will be used as resolution spaces in our computations): 
%%%%%%%%%%%%%%%%%%%%%%%%%%%%%%%%%%%%%%%%%%%%%%%%%%%
\begin{Definition}[Time-Space Fourier-Herz-Lorentz spaces]\label{def_FourierLorentz}
Let $1\leq  p,q < +\infty$ be two real indexes and let $\mathfrak{s}\in \mathbb{R}$, we define the space $L^{p,\infty}_t([0,+\infty[,  \mathcal{F}^{\mathfrak{s}}_{L^{q,\infty}_\xi}(\mathbb{R}^d))$, also denoted shortly by $L^{p,\infty}_t(\mathcal{F}^{\mathfrak{s}}_{L^{q,\infty}_\xi})$, as the set of measurable functions $\vec{\phi}:[0,+\infty[\times\mathbb{R}^d\longrightarrow \mathbb{R}^d$ such that its weighted Fourier transform in the space variable $|\xi|^\mathfrak{s}\widehat{\;\vec{\phi}\;}(t, \xi)$ belongs to the Lorentz space $L^{q,\infty}(\mathbb{R}^d)$ and the resulting function in the time variable belongs to the Lorentz space $L^{p,\infty}_t([0,+\infty[)$. We equip this space with the norm
\begin{equation}\label{Def_NormFHL}
\|\vec{\phi}(\cdot,\cdot)\|_{L^{p,\infty}_t(\mathcal{F}^{\mathfrak{s}}_{L^{q,\infty}_\xi})} =\left\|\,\big\||\cdot|^{\mathfrak{s}}\widehat{\vec{\phi}}(\cdot,\cdot)\big\|_{L^{q,\infty}}\right\|_{L^{p,\infty}_t}.
\end{equation}
Note in particular that when $\mathfrak{s}=0$, we simply have 
$$\|\vec{\phi}(\cdot,\cdot)\|_{L^{p,\infty}_t(\mathcal{F}^{0}_{L^{q,\infty}_\xi})}=\|\,\|\widehat{\vec{\phi}}(\cdot,\cdot)\|_{L^{q,\infty}}\|_{L^{p,\infty}_t}=\|\vec{\phi}(\cdot,\cdot)\|_{L^{p,\infty}_t(L^{q,\infty}_\xi)}.$$
\end{Definition}
Remark that in the previous functional spaces defined with the expressions (\ref{NormFBL}) and (\ref{Def_NormFHL}), the Fourier transform is only considered in the space variable and not in the time variable. Let us mention here that this generic family of functional spaces for the initial data was considered before for the study of mild solutions in \cite{Aurazo}, \cite{CannoneWu}, \cite{Barakaa}, \cite{Ferreira},  \cite{LeiLin} and \cite{Xiao} (see also the references therein for more details about this functional framework). \\

\noindent With these functional spaces at hand, we can now state our first result.
%%%%%%%%%%%%%%%%%%%%%%%%%%%%%%%%%%%%%%%%%%%%%%%%%%%%%%%%%%%%%%%%%%%%%%%%%%%%%%%%%%%%%%%%%%%%%%%%%%%%%%
\begin{Theoreme}\label{thm_boussinesq_simple}
Let $1<\alpha < 2$ and $d\geq 2$. % $1<\alpha \leq 2$ if $d\geq 3$ and consider $1<\alpha<2$ if $d=2$. 
Let us fix the real parameters $1< p, q<+\infty$, $1<\mathfrak{p}, \mathfrak{q}<+\infty$, $1< {\bf p}, {\bf q}<+\infty$ and $\mathfrak{s}\in \mathbb{R}$ by the conditions:
\begin{itemize}
\item[1)]$\frac{\alpha}{p}+\frac{d}{q'}=\alpha-1$,  with $2<p<+\infty$, $1<q<2$,\\
\item[2)]$\frac{\alpha}{\mathfrak{p}}+\frac{d}{\mathfrak{q}'}=2\alpha-1$, with $1<p'<\mathfrak{p}<p$ and $q\leq \mathfrak{q}< q'$,\\
\item[3)]$\frac{\alpha}{{\bf p}}+\frac{d}{{\bf q}'}-\mathfrak{s}=2\alpha-1$ with  $1<{\bf p}<p$ and $q\leq {\bf q}<+\infty$.
\end{itemize}
Let $\vu_0$ be a divergence free initial data such that $\vu_0 \in \dot{\mathscr{B}}^{-\frac{\alpha}{p},\infty}_{L^{q,\infty}_\xi}(\mathbb{R}^d)$, let $\theta_0$ be an initial temperature that belongs to the space $\dot{\mathscr{B}}^{-\frac{\alpha}{\mathfrak{p}},\infty}_{L^{\mathfrak{q},\infty}_\xi}(\mathbb{R}^d)$ and consider an external force $\vec{f}$ that belongs to the space $L^{{\bf p},\infty}_t([0,+\infty[, \mathcal{F}^{\mathfrak{s}}_{L^{{\bf q},\infty}_\xi}(\mathbb{R}^d))$. If the quantity 
$$\|\vu_0\|_{\dot{\mathscr{B}}^{-\frac{\alpha}{p}, \infty}_{L^{q,\infty}_\xi}}+\|\theta_0\|_{\dot{\mathscr{B}}^{-\frac{\alpha}{\mathfrak{p}}, \infty}_{L^{\mathfrak{q},\infty}_\xi}}+\|\vec{f}\|_{L^{{\bf p},\infty}_t(\mathcal{F}^{\mathfrak{s}}_{L^{{\bf q},\infty}_\xi})},$$
is small enough, then there exists a couple $(\vec{u},\theta)$ which is a global mild solution to the fractional dissipative Navier-Stokes-Boussinesq system (\ref{eq_Ptfix}) and we have $(\vec{u},\theta)\in L^{p,\infty}_t(L^{q,\infty}_\xi)\times L^{\mathfrak{p},\infty}_t(L^{\mathfrak{q},\infty}_\xi)$.\\
\end{Theoreme}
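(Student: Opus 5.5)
We will solve the fixed point problem (\ref{eq_Ptfix}) by the Banach contraction principle in the product space $E=L^{p,\infty}_t(L^{q,\infty}_\xi)\times L^{\mathfrak{p},\infty}_t(L^{\mathfrak{q},\infty}_\xi)$. We work on the Fourier side, where $\widehat{\mathfrak{p}_t*\phi}(\xi)=e^{-t|\xi|^\alpha}\widehat{\phi}(\xi)$ and the operators $\mathbb{P}$ and $\mathrm{div}$ become multipliers bounded by $1$ and $|\xi|$. Condition 1) makes the $\vec{u}$ norm invariant under the scaling $\lambda^{\alpha-1}\vec{u}(\lambda^\alpha t,\lambda x)$, and 2)--3) do the same for $\theta$ and $\vec{f}$. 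By the abstract lemma (a small $y_0$ combined with a bilinear form $B$ of norm $C_B$, together with a linear operator $L$ of norm $<1$), it is enough to prove the following.

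\textbf{(i) Linear estimates for the data.} We show $\|\mathfrak{p}_t*\vu_0\|_{L^{p,\infty}_t(L^{q,\infty}_\xi)}\leq C\|\vu_0\|_{\dot{\mathscr{B}}^{-\alpha/p,\infty}_{L^{q,\infty}_\xi}}$, and similarly for $\theta_0$. Decompose dyadically in $\xi$ and use that on the annulus $|\xi|\sim 2^j$ the heat factor is at most $e^{-ct2^{j\alpha}}$. Then $\|e^{-t|\xi|^\alpha}\widehat{\vu_0}\|_{L^{q,\infty}}\lesssim \sup_j 2^{-j\alpha/p}\|\mathds 1_{j}\widehat{\vu_0}\|_{L^{q,\infty}}\sum_j 2^{j\alpha/p}e^{-ct2^{j\alpha}}$, where the last sum is $\lesssim t^{-1/p}$. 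Since $t^{-1/p}\in L^{p,\infty}_t$, the bound follows. For the force, the Duhamel term $\int_0^t e^{-(t-s)|\xi|^\alpha}\widehat{\mathbb{P}\vec f}(s)\,ds$ is handled as in (iii) below: use $e^{-(t-s)|\xi|^\alpha}\lesssim (t-s)^{-\gamma/\alpha}|\xi|^{-\gamma}$ with $\gamma=\mathfrak{s}+\alpha-\ldots$ chosen so the weights match; this is weighted Hölder--Young in Lorentz spaces in $\xi$ plus Hardy--Littlewood--Sobolev in $t$, with exponents dictated by 1) and 3) and ${\bf p}<p$, ${\bf q}\ge q$.

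\textbf{(ii) Bilinear estimates.} On the Fourier side, $\widehat{\vec u\otimes\vec u}=\widehat{\vec u}*\widehat{\vec u}$. Young's inequality in Lorentz spaces gives $\|\widehat{u}*\widehat{v}\|_{L^{r,\infty}}\lesssim \|\widehat u\|_{L^{q,\infty}}\|\widehat v\|_{L^{q,\infty}}$ with $1+\frac1r=\frac2q$, which requires $q<2$. We then bound
\[\Big|\int_0^t e^{-(t-s)|\xi|^\alpha}|\xi|\,\widehat{u}*\widehat{v}(s,\xi)\,ds\Big|.\]
To return from $L^{r,\infty}$ to $L^{q,\infty}$ in $\xi$, we use the pointwise bound $|\xi|e^{-(t-s)|\xi|^\alpha}\lesssim (t-s)^{-\beta}\cdot |\xi|^{1-\alpha\beta}$ and Hölder with $|\xi|^{1-\alpha\beta}\in L^{d/(\alpha\beta-1),\infty}$, with $\beta$ fixed by $\frac1q=\frac1r+\frac{\alpha\beta-1}{d}$. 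In time we have $s\mapsto \|\widehat u\|\|\widehat v\|\in L^{p/2,\infty}$, and Hardy--Littlewood--Sobolev for $|t-s|^{-\beta}$ maps $L^{p/2,\infty}\to L^{p,\infty}$ exactly when $\beta=1-\frac1p$. One checks that this is equivalent to 1), and $p>2$ makes the exponents admissible. The coupling term $\mathrm{div}(\theta\vec u)$ is estimated identically, with mixed exponents $(p,q)$ and $(\mathfrak{p},\mathfrak{q})$. Here $\mathfrak{q}<q'$ gives $\frac1q+\frac1{\mathfrak{q}}>1$, so that Young's inequality applies, and $p'<\mathfrak{p}$ gives $\frac1p+\frac1{\mathfrak p}<1$, so that HLS applies; the scaling identity follows by subtracting 1) from 2).

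\textbf{(iii) The linear coupling term.} The term $\int_0^t\mathfrak{p}_{t-s}*\mathbb{P}(\theta\vec e_d)\,ds$ must map $L^{\mathfrak p,\infty}_t(L^{\mathfrak q,\infty}_\xi)$ into $L^{p,\infty}_t(L^{q,\infty}_\xi)$. Using $e^{-(t-s)|\xi|^\alpha}\lesssim (t-s)^{-\beta}|\xi|^{-\alpha\beta}$, Hölder in $\xi$ maps $L^{\mathfrak q,\infty}$ to $L^{q,\infty}$ when $\frac1q=\frac1{\mathfrak q}+\frac{\alpha\beta}{d}$ (this needs $q\le\mathfrak q$), and HLS in time requires $1+\frac1p=\frac1{\mathfrak p}+\beta$. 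Subtracting 1) from 2) shows these are consistent. This operator is linear with a non-small norm, so it cannot simply be absorbed. We handle it by rescaling the norm: take $E$ with norm $\|\vec u\|+\varepsilon^{-1}\|\theta\|$, or equivalently note that the system is triangular in the linear part. Then $\|L\|\lesssim \varepsilon$ in the $\vec u$ component, while the bilinear constants grow at most by powers of $\varepsilon^{-1}$, and smallness of the data compensates for this.

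I expect the main obstacle to be this last point, together with verifying that all Lorentz Young/Hölder/HLS exponents lie strictly inside $(1,\infty)$ under the stated ranges. The endpoints matter, for instance $|\xi|^{-\alpha\beta}$ lying in a weak space and the requirement $0<\beta<1$. I will check these case by case using 1)--3).
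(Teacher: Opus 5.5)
Your proposal is correct and is essentially the paper's proof. It uses the same space $E$, with the $\theta$-component weighted by a large constant $\mathfrak{C}$ (your $\varepsilon^{-1}$) to make the linear coupling small, the dyadic bound $\sum_j 2^{j\alpha/p}e^{-t2^{j\alpha}}\lesssim t^{-1/p}$ for the data, and Lorentz H\"older in $\xi$ followed by weak-type Young in $t$ for the force, linear and bilinear terms; your pointwise bound $|\xi|e^{-\tau|\xi|^\alpha}\lesssim \tau^{-\beta}|\xi|^{1-\alpha\beta}$ is just a repackaging of the paper's scaling computation of $\|e^{-\tau|\xi|^\alpha}|\xi|\|_{L^{q',\infty}}$. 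Two small corrections: the exponent identity needed for $\mathrm{div}(\theta\vec{u})$ is condition 1) itself, not the difference of 2) and 1), which is what the linear and force terms use (for the force, take $\gamma=\alpha(1+\frac1p-\frac1{\bf p})$ so that $\gamma+\mathfrak{s}=d(\frac1q-\frac1{\bf q})$); and the bilinear constant does not actually grow with $\mathfrak{C}$, since $\theta$ enters $\theta\vec{u}$ only once.
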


%%%%%%%%%%%%%%%%%%%%%%%%%%%%%%%%%%%%%%%%%%%%%%%%%%%%%%%%%%%%%%%%%%%%%%%%%%%%%%%%%%%%%%%%%%%%%%%%%%%%%%
We give a few comments on this result. First note that the relationships $\frac{\alpha}{p}+\frac{d}{q'}=\alpha-1$, $\frac{\alpha}{\mathfrak{p}}+\frac{d}{\mathfrak{q}'}=2\alpha-1$ and $\frac{\alpha}{{\bf p}}+\frac{d}{{\bf q}'}-\mathfrak{s}=2\alpha-1$ correspond to the expected critical scaling for the velocity $\vec{u}$, the temperature $\theta$ and the external force $\vec{f}$, and this particular feature allows us to consider global in time mild solutions. Second, it is worth to recall here that the resolution spaces $L^{p,\infty}_t(L^{q,\infty}_\xi)$ and $L^{\mathfrak{p},\infty}_t(L^{\mathfrak{q},\infty}_\xi)$ do not involve any kind of regularity. Remark also that this result, due to the Fourier approach, is completely different from the previous results obtained in \cite{Chamorro_Mansais1} or \cite{YangJ}. Note finally, that in existing literature -and to the best of our knowledge- the system (\ref{eq_Ptfix}) was studied without external forces.\\

We note now that the study performed above is relatively symmetrical in the sense that the general structure of the functional spaces for $\vu$ and $\theta$ is essentially the same (and the same observation holds for the initial data $\vu_0$ and $\theta_0$): the only difference between these spaces is given by the relationships over the indexes. However, if we consider a slightly different framework in the study of the unknowns $\vu$ and $\theta$, we can improve the previous result in an interesting manner and in what follows we are going to consider a different functional framework for the velocity field $(\vu_0, \vu)$ and for the temperature $(\theta_0, \theta)$. Indeed, by analyzing the structure of the functional spaces $\dot{\mathscr{B}}^{s,p}_{L^{q,\infty}_\xi}(\mathbb{R}^d)$ it is possible to identify some inclusion properties between these spaces and in this sense we have the following result: 
%%%%%%%%%%%%%%%%%%%%%%%%%%%%%%%%%%%%%%%%%%%%%%%%%%%
\begin{Proposition}\label{prop_max_fourier_besov}
Consider $(E, \|\cdot\|_E)$ be a  Banach such that $\mathcal{S}(\mathbb{R}^d)\subset E\subset  \mathcal{S}'(\mathbb{R}^d)$. If we have the two following properties:
\begin{itemize}
\item[1)] there exists a constant $C>0$ such that  $\|\mathds{1}_{\{1\leq|\xi|\leq 2\}}\widehat{\vec{\psi}}(\cdot)\|_{L^1}\leq C\|\vec{\psi}\|_E$ for all $\vec{\psi}\in E$ and
\item[2)] we have the homogeneity property $\|\lambda^\beta \vec{\psi}(\lambda\cdot)\|_E=\|\vec{\psi}\|_E$ for all $\lambda>0$,
\end{itemize}
then we have the space inclusion $E\subset \dot{\mathscr{B}}^{-\beta, \infty}_{L^{1}_\xi}(\mathbb{R}^d)$. \\

\noindent In particular, if for some indexes $1<p,q<+\infty$ and $1<\alpha<2$ we have the relationship $\frac{\alpha}{p}+\frac{d}{q'}=\beta$ then we have the space inclusion
$$\dot{\mathscr{B}}^{-\frac{\alpha}{p}, \infty}_{L^{q,\infty}_\xi}(\mathbb{R}^d)\subset \dot{\mathscr{B}}^{-\beta, \infty}_{L^{1}_\xi}(\mathbb{R}^d).$$
\end{Proposition}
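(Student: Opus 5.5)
The plan is a scaling argument in the spirit of the classical proof that every critical translation-invariant Banach space embeds in $\dot B^{-1}_{\infty,\infty}$. Here the translation-invariant $L^\infty$ bound is replaced by the frequency-localized $L^1$ bound of hypothesis 1). Fix $\vec\psi\in E$ and $j\in\mathbb{Z}$, and set $\vec\psi_\lambda(x)=\lambda^\beta\vec\psi(\lambda x)$ with $\lambda=2^{-j}$. Since $\widehat{\vec\psi_\lambda}(\xi)=\lambda^{\beta-d}\widehat{\vec\psi}(\xi/\lambda)$, the change of variables $\eta=\xi/\lambda=2^{j}\xi$ gives
$$\big\|\mathds{1}_{\{1\leq|\xi|\leq2\}}\widehat{\vec\psi_\lambda}\big\|_{L^1}=\lambda^{\beta-d}\lambda^{d}\big\|\mathds{1}_{\{2^j\leq|\eta|\leq 2^{j+1}\}}\widehat{\vec\psi}\big\|_{L^1}=2^{-j\beta}\big\|\mathds{1}_{\{2^j\leq|\eta|\leq 2^{j+1}\}}\widehat{\vec\psi}\big\|_{L^1}.$$
Applying 1) to $\vec\psi_\lambda$ and then 2) bounds the left-hand side by $C\|\vec\psi_\lambda\|_E=C\|\vec\psi\|_E$. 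Taking the supremum over $j$ yields $\|\vec\psi\|_{\dot{\mathscr B}^{-\beta,\infty}_{L^1_\xi}}\leq C\|\vec\psi\|_E$.

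One technical point needs care. For $\vec\psi\in\mathcal S'$ the object $\mathds{1}_{\{1\leq|\xi|\leq2\}}\widehat{\vec\psi}$ is a priori only meaningful when $\widehat{\vec\psi}$ is locally a function on the annulus. Hypothesis 1) implicitly asserts this, so I read it that way. The quotient modulo polynomials is harmless, because polynomials have Fourier support at the origin and do not see the annuli.

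For the particular case I will take $E=\dot{\mathscr{B}}^{-\frac{\alpha}{p},\infty}_{L^{q,\infty}_\xi}$ and check both hypotheses.

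For the homogeneity 2), the same change of variables gives
$$\mathds{1}_{\{2^j\leq|\xi|\leq2^{j+1}\}}\widehat{\vec\psi_\lambda}(\xi)=\lambda^{\beta-d}\,g(\xi/\lambda),\qquad g=\mathds{1}_{\{2^j/\lambda\leq|\eta|\leq 2^{j+1}/\lambda\}}\widehat{\vec\psi}.$$
Its $L^{q,\infty}$ quasi-norm equals $\lambda^{\beta-d+d/q}\|g\|_{L^{q,\infty}}$. Now I restrict to $\lambda=2^k$, so that the annuli shift by $k$ and the weights $2^{-j\alpha/p}$ contribute a factor $2^{-k\alpha/p}$ after reindexing. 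The total power is then
$$\beta-d+\frac dq-\frac{\alpha}{p}=\beta-\frac{d}{q'}-\frac{\alpha}{p}=0,$$
which is exactly where the relation $\frac{\alpha}{p}+\frac{d}{q'}=\beta$ enters.

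General $\lambda$ does not map dyadic annuli onto dyadic annuli, so exact invariance may fail. Fortunately the argument only uses dyadic $\lambda=2^{-j}$, so invariance under dyadic dilations suffices, and I will state the first part with this weaker hypothesis.

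For property 1) I will use Hölder's inequality in Lorentz spaces on a set of finite measure:
$$\|\mathds{1}_A h\|_{L^1}\leq C_q|A|^{1/q'}\|h\|_{L^{q,\infty}}\quad\text{for } 1<q<\infty.$$
With $A=\{1\leq|\xi|\leq2\}$ this gives
$$\|\mathds{1}_{\{1\leq|\xi|\leq2\}}\widehat{\vec\psi}\|_{L^1}\leq C\|\mathds{1}_{\{1\leq|\xi|\leq2\}}\widehat{\vec\psi}\|_{L^{q,\infty}}\leq C\|\vec\psi\|_{\dot{\mathscr B}^{-\alpha/p,\infty}_{L^{q,\infty}_\xi}},$$
the last step taking the $j=0$ term, whose weight is $1$.

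The remaining technicalities are that $\mathcal S\subset E\subset\mathcal S'$ holds modulo polynomials and that $L^{q,\infty}$ is only quasi-normed; neither affects the estimates. Alternatively, the particular case can be proved directly by applying the same Hölder bound on each annulus $\{2^j\leq|\xi|\leq2^{j+1}\}$, which has measure $\sim2^{jd}$:
$$\|\mathds{1}_{\{2^j\leq|\xi|\leq2^{j+1}\}}\widehat{\vec\psi}\|_{L^1}\lesssim2^{jd/q'}\|\mathds{1}_{\{2^j\leq|\xi|\leq2^{j+1}\}}\widehat{\vec\psi}\|_{L^{q,\infty}}.$$
Multiplying by $2^{-j\beta}=2^{-j\alpha/p}2^{-jd/q'}$ gives the claim. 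I would present this direct computation as the primary argument for the particular case, since it avoids the dyadic-versus-continuous scaling subtlety. The main obstacle is that subtlety together with the precise meaning of hypothesis 1) for general distributions.
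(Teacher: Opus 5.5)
Your proof is correct. The general part follows the paper's scaling argument exactly: rescale by $\lambda=2^{-j}$, which moves $\{2^j\leq|\xi|\leq 2^{j+1}\}$ onto $\{1\leq|\xi|\leq 2\}$ with the factor $2^{-j\beta}$, then apply 1) and 2).

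For the particular case, the paper also checks 1) through the $j=0$ term and 2) through homogeneity. Your version is tighter on two points the paper glosses over.
\begin{itemize}
\item The comparison $\|\mathds{1}_{\{1\leq|\xi|\leq 2\}}\widehat{f}\|_{L^1}\leq q'\,|\{1\leq|\xi|\leq 2\}|^{1/q'}\,\|\mathds{1}_{\{1\leq|\xi|\leq 2\}}\widehat{f}\|_{L^{q,\infty}}$ genuinely needs the constant. The paper states it without one, which is harmless since 1) allows a constant.
\item The paper verifies 2) only up to equivalence ($\simeq$), which does not literally meet the hypothesis as stated. Your remark that the norm is exactly invariant under dyadic dilations, and that the first part only uses $\lambda=2^{-j}$, closes this gap.
\end{itemize}

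Your direct annulus-by-annulus H\"older estimate is a genuinely more elementary route to the particular inclusion. It uses $|\{2^j\leq|\xi|\leq 2^{j+1}\}|^{1/q'}\simeq 2^{jd/q'}$ together with $2^{-j\beta}=2^{-j\alpha/p}\,2^{-jd/q'}$. This gives an explicit constant and bypasses both the abstract proposition and every scaling subtlety. The paper's route, in exchange, shows the inclusion as one instance of a general principle. That principle is that every critical space with a localized $L^1$ control on the unit annulus embeds into $\dot{\mathscr{B}}^{-\beta,\infty}_{L^1_\xi}$.
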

%%%%%%%%%%%%%%%%%%%%%%%%%%%%%%%%%%%%%%%%%%%%%%%%%%%
The proof of this proposition is postponed to the Appendix \ref{Appendix1}. Note that we trivially have the space inclusion $\dot{\mathscr{B}}^{-\beta, \infty}_{L^{1}_\xi}(\mathbb{R}^d)\subset \dot{\mathscr{B}}^{-\beta, \infty}_{L^{1,\infty}_\xi}(\mathbb{R}^d)$, since we always have the  embedding $L^1\subset L^{1,\infty}$ between the Lebesgue space $L^1$ and the Lorentz space $L^{1,\infty}$. However, the larger Lorentz space $L^{1,\infty}$ will not be suitable for our purposes as it does not have a convenient behavior when dealing with Young inequalities for convolution (see more details in Section \ref{Secc_LorentzProperties} and Remark \ref{Remarque1} below) and this particular fact forces us to consider the smaller Lebesgue space $L^1$ instead of the Lorentz space $L^{1,\infty}$. \\

Now we are going to use this previous proposition to perform, as announced, a separated study of the velocity field and of the temperature and this will lead us to a generalization of the Theorem \ref{thm_boussinesq_simple}.
%%%%%%%%%%%%%%%%%%%%%%%%%%%%%%%%%%%%%%%%%%%%%%%%%%%%%%%%%%%%%%%%%%%%%%%%%%%%%%%%%%%%%%%%%%%%%%%%%%%%%%
\begin{Theoreme}\label{thm_boussinesq_L1_alpha}
Let $d\geq 2$ and consider a diffusion parameter $1<\alpha < 2$ in the equation (\ref{eq_NSBalpha}).\\

\noindent Let $\vu_0:\mathbb{R}^d\longrightarrow \mathbb{R}^d$ be a divergence free initial velocity field such that $\vu_0 \in \dot{\mathscr{B}}^{-(\alpha-1), \infty}_{L^{1}_\xi}(\mathbb{R}^d)$. Let $\theta_0:\mathbb{R}^d\longrightarrow \mathbb{R}$ be an initial temperature such that $\theta_0\in \dot{\mathscr{B}}^{-\frac{\alpha}{\mathfrak{p}}, \infty}_{L^{\mathfrak{q},\infty}_\xi}(\mathbb{R}^d)$ where we have the relationship $\frac{\alpha}{\mathfrak{p}}+\frac{d}{\mathfrak{q}'}=2\alpha-1$ and the condition $1<\alpha<\mathfrak{p}<\frac{\alpha}{\alpha-1}$. Consider moreover an external force $\vf:[0,+\infty[\times \mathbb{R}^d\longrightarrow \mathbb{R}^d$ such that $L^{{\bf p},\infty}_t([0,+\infty[, \mathcal{F}^{\mathfrak{s}}_{L^{{\bf q},\infty}_\xi}(\mathbb{R}^d))$ under the condition $\frac{\alpha}{\bf p}+\frac{d}{{\bf q}'}-\mathfrak{s}=2\alpha-1$ with $1<{\bf p}<\frac{\alpha}{\alpha-1}$, $1< {\bf q}<+\infty$ and $0\leq {\mathfrak{s}}<\frac{d}{{\bf q}'}$.\\

\noindent If the quantity $\|\vu_0\|_{\dot{\mathscr{B}}^{-(\alpha-1), \infty}_{L^{1}_\xi}}+\|\theta_0\|_{ \dot{\mathscr{B}}^{-\frac{\alpha}{\mathfrak{p}}, \infty}_{L^{\mathfrak{q},\infty}_\xi}}+\|\vec{f}\|_{L^{{\bf p},\infty}_t(\mathcal{F}^{\mathfrak{s}}_{L^{{\bf q},\infty}_\xi})}$ is small enough then there exists a global mild solution $(\vec{u},\theta)$ to the fractional dissipative Navier-Stokes-Boussinesq system  (\ref{eq_NSBalpha}) such that we have $(\vec{u},\theta)\in L^{\frac{\alpha}{\alpha-1},\infty}_t(L^1_\xi)\times L^{\mathfrak{p},\infty}_t(L^{\mathfrak{q},\infty}_\xi)$.\\
\end{Theoreme}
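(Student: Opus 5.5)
We plan to apply the Banach contraction principle to the Duhamel formulation (\ref{eq_Ptfix}) in the product space $X=L^{\frac{\alpha}{\alpha-1},\infty}_t(L^1_\xi)\times L^{\mathfrak{p},\infty}_t(L^{\mathfrak{q},\infty}_\xi)$, with norm $\|(\vu,\theta)\|_X=\|\vu\|+\|\theta\|$. Everything is done pointwise in frequency. The Fourier multiplier of $\mathbb{P}$ is bounded. The semigroup acts as $e^{-t|\xi|^\alpha}$, so each Duhamel term is bounded in modulus by
$$\int_0^t e^{-(t-s)|\xi|^\alpha}|\xi|^{a}\,|\widehat{F}(s,\xi)|\,ds$$
for a suitable exponent $a$. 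Throughout, products become convolutions in $\xi$, which we control by Young's inequality in Lorentz spaces (Section \ref{Secc_LorentzProperties}).

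\emph{Linear terms.} For $\vu_0\in\dot{\mathscr{B}}^{-(\alpha-1),\infty}_{L^1_\xi}$ we localize on dyadic annuli and write
$$\|e^{-t|\xi|^\alpha}\widehat{\vu_0}\|_{L^1}\le\sum_j e^{-ct2^{j\alpha}}2^{j(\alpha-1)}\|\vu_0\|_{\dot{\mathscr{B}}^{-(\alpha-1),\infty}_{L^1_\xi}}\lesssim t^{-\frac{\alpha-1}{\alpha}}\|\vu_0\|,$$
and $t^{-\frac{\alpha-1}{\alpha}}\in L^{\frac{\alpha}{\alpha-1},\infty}_t$. The same dyadic argument in $L^{\mathfrak{q},\infty}$ gives $\theta_L\in L^{\mathfrak{p},\infty}_t(L^{\mathfrak{q},\infty}_\xi)$; this is the estimate already used for Theorem \ref{thm_boussinesq_simple}.

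\emph{Force term.} For the force, we bound $\|\int_0^te^{-(t-s)|\xi|^\alpha}|\xi|^{-\mathfrak{s}}\,|\xi|^{\mathfrak{s}}|\widehat f(s)|\,ds\|_{L^1_\xi}$. We take the $L^1_\xi$ norm inside the integral and apply Hölder (Lorentz) to get
$$\|e^{-(t-s)|\xi|^\alpha}|\xi|^{-\mathfrak{s}}\|_{L^{{\bf q}',1}_\xi}\lesssim(t-s)^{-(\frac{d}{{\bf q}'}-\mathfrak{s})/\alpha}.$$
This requires $0\le\mathfrak{s}<d/{\bf q}'$, which is exactly the hypothesis. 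By the scaling relation the exponent equals $1-\frac1{\bf p}+\frac{\alpha-1}{\alpha}$, so a Hardy–Littlewood–Sobolev inequality in time (Lorentz version, valid since $1<{\bf p}<\frac{\alpha}{\alpha-1}$) maps $L^{{\bf p},\infty}_t$ to $L^{\frac{\alpha}{\alpha-1},\infty}_t$.

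\emph{Coupling term.} The coupling term $\int_0^t\mathfrak{p}_{t-s}*\mathbb{P}(\theta\vec e_d)\,ds$ is handled the same way, viewing $\theta$ as a force with $\mathfrak{s}=0$, ${\bf p}=\mathfrak{p}$, ${\bf q}=\mathfrak{q}$. This is where $\mathfrak{p}<\frac{\alpha}{\alpha-1}$ is needed. The coupling term is linear and not small, so we will use a weighted norm $\|\vu\|+\varepsilon\|\theta\|$ (or, equivalently, exploit that the term only involves $\theta$, which is small), to keep the contraction.

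\emph{Bilinear terms.} For $\mathbb{P}\div(\vu\otimes\vu)$ we have $|\widehat{\vu\otimes\vu}|\le|\widehat\vu|*|\widehat\vu|$ and $\|\hat u*\hat v\|_{L^1}\le\|\hat u\|_{L^1}\|\hat v\|_{L^1}$. The derivative yields the factor $|\xi|e^{-(t-s)|\xi|^\alpha}\le C(t-s)^{-1/\alpha}$ pointwise, so
$$\|B(\vu,\vu)(t)\|_{L^1}\lesssim\int_0^t(t-s)^{-1/\alpha}\|\hat\vu(s)\|_{L^1}^2\,ds.$$
Here $\|\hat\vu\|_{L^1}^2\in L^{\frac{\alpha}{2(\alpha-1)},\infty}_t$ (Hölder in Lorentz), and HLS with kernel $|t|^{-1/\alpha}$ (note $1/\alpha<1$) maps this into $L^{r,\infty}$ with $\frac1r=\frac{2(\alpha-1)}{\alpha}-1+\frac1\alpha=\frac{\alpha-1}{\alpha}$, as needed. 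This uses $\frac{\alpha}{2(\alpha-1)}>1$, i.e. $\alpha<2$.

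For $\div(\theta\vu)$ we use Young's inequality $\|\hat\theta*\hat\vu\|_{L^{\mathfrak{q},\infty}}\le\|\hat\theta\|_{L^{\mathfrak{q},\infty}}\|\hat\vu\|_{L^1}$, which is why $L^1$ rather than $L^{1,\infty}$ is required (Remark \ref{Remarque1}). We then gain $(t-s)^{-1/\alpha}$, and HLS maps $L^{s,\infty}_t$ with $\frac1s=\frac1{\mathfrak{p}}+\frac{\alpha-1}{\alpha}$ back to $L^{\mathfrak{p},\infty}_t$. This requires $s>1$, i.e. $\frac1{\mathfrak{p}}<\frac1\alpha$, which is exactly $\mathfrak{p}>\alpha$.

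\emph{Main obstacle.} The delicate points are the endpoint nature of the Lorentz HLS and Young inequalities (the $L^1_\xi$ factor must be a genuine $L^1$ norm), and handling the non-small linear coupling term in the contraction. For the latter we will first try a solution-map estimate on a ball where $\|\theta\|\le\delta$ and $\|\vu\|\le C(\|\vu_0\|+\|f\|+\delta)$, then close with smallness.
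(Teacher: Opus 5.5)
Your proposal follows the paper's proof essentially step by step. It uses the same resolution space and the same dyadic bound $t^{-\frac{\alpha-1}{\alpha}}$ for $\vu_0$. It uses the same $L^{{\bf q}',1}\times L^{{\bf q},\infty}$ H\"older step plus Young in time for the force and (with $\mathfrak{s}=0$) for the coupling term. It uses the same $L^1*L^1$ and $L^1*L^{\mathfrak{q},\infty}$ convolution bounds with the kernel $s^{-1/\alpha}\in L^{\alpha,\infty}_t$. The conditions $\alpha<2$, $\alpha<\mathfrak{p}<\frac{\alpha}{\alpha-1}$ and ${\bf p}<\frac{\alpha}{\alpha-1}$ enter exactly where you say.

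The one thing to correct is the direction of the weight. With the norm $\|\vu\|+\varepsilon\|\theta\|$, $\varepsilon$ small, you only get $\|L(\vvU)\|\le C\|\theta\|=\frac{C}{\varepsilon}\,\varepsilon\|\theta\|$, so $C_L=C/\varepsilon$ is large and the contraction fails.

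The paper instead uses $\|\vu\|+\mathfrak{C}\|\theta\|$ with $\mathfrak{C}\gg 1$. This gives $C_L=C/\mathfrak{C}<\frac13$. It leaves $C_B$ unchanged, because the $\theta$-equation is linear in $\theta$, so $\mathfrak{C}\|B_2(\vu,\theta)\|\le C\|\vu\|\,\mathfrak{C}\|\theta\|$. The only cost is that $\|\theta_0\|$ must be small relative to $1/\mathfrak{C}$.

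This is precisely what your alternative of a ball with small $\theta$-radius amounts to. Written as a weighted norm, it also gives the Lipschitz estimate needed for the contraction, not just invariance of the ball.
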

%%%%%%%%%%%%%%%%%%%%%%%%%%%%%%%%%%%%%%%%%%%%%%%%%%%%%%%%%%%%%%%%%%%%%%%%%%%%%%%%%%%%%%%%%%%%%%%%%%%%%%
Let us remark that, from the point of view of the homogeneity, the functional spaces $\dot{\mathscr{B}}^{-(\alpha-1), \infty}_{L^{1}_\xi}$, $\dot{\mathscr{B}}^{-\frac{\alpha}{\mathfrak{p}}, \infty}_{L^{\mathfrak{q},\infty}_\xi}$ and $L^{{\bf p},\infty}_t(\mathcal{F}^{\mathfrak{s}}_{L^{{\bf q},\infty}_\xi})$ satisfy the expected stability identities with respect to the dilations as we have $\|\lambda^{\alpha-1}\vu_0(\lambda \cdot)\|_{\dot{\mathscr{B}}^{-(\alpha-1), \infty}_{L^{1}_\xi}} \simeq \|\vu_0(\cdot)\|_{\dot{\mathscr{B}}^{-(\alpha-1), \infty}_{L^{1}_\xi}}$, $\|\lambda^{2\alpha-1}\theta_0(\lambda \cdot)\|_{\dot{\mathscr{B}}^{-\frac{\alpha}{\mathfrak{p}}, \infty}_{L^{\mathfrak{q},\infty}_\xi}} \simeq \|\theta_0(\cdot)\|_{\dot{\mathscr{B}}^{-\frac{\alpha}{\mathfrak{p}}, \infty}_{L^{\mathfrak{q},\infty}_\xi}}$ and \\ $\|\lambda^{2\alpha-1}\vec{f}(\lambda^\alpha \cdot,\lambda \cdot)\|_{L^{{\bf p},\infty}_t(\mathcal{F}^{\mathfrak{s}}_{L^{{\bf q},\infty}_\xi})}=\|\vec{f}(\cdot, \cdot)\|_{L^{{\bf p},\infty}_t(\mathcal{F}^{\mathfrak{s}}_{L^{{\bf q},\infty}_\xi})}$. Note also that,  due to the Proposition \ref{prop_max_fourier_besov}, the functional space considered for the initial data $\vu_0$ is more general than the one considered in the previous Theorem \ref{thm_boussinesq_simple} but the framework for the initial temperature $\theta_0$ and the external force $\vf$ remains essentially the same. The only difference is based on the conditions over the parameters: in this result the lower and upper bounds for the indexes $\mathfrak{p}$ and ${\bf p}$ depend explicitly on $\alpha$. These two results are however different, specially if we focus our study on the external force. Indeed, note that the condition imposed for the index ${\bf p}$ in Theorem \ref{thm_boussinesq_L1_alpha} is $1<{\bf p}<\frac{\alpha}{\alpha-1}$ with $1<\alpha<2$, while the condition \emph{3)} of the Theorem \ref{thm_boussinesq_simple} is simply ${\bf p}<p$ where $2<p<+\infty$ and thus it is not hard to consider an external force $\vf$ that satisfies the hypotheses of one of these two results but that it is not suitable for the other one. \\

To the best of our knowledge these two results are new for the setting of the fractional incompressible 
Navier-Stokes-Boussinesq system (\ref{eq_NSBalpha_Intro}) and we will compare in the Appendix \ref{Appendix2} the result above with the ones obtained previously in \cite{Chamorro_Mansais1}. Although we do not claim any kind of optimality here, we believe that the use of the larger space $\dot{\mathscr{B}}^{-(\alpha-1), \infty}_{L^{1,\infty}_\xi}$ for the initial data $\vu_0$, as well as the space $L^{\frac{\alpha}{\alpha-1},\infty}_t(L^{1,\infty}_\xi)$ as a resolution space for the velocity field, seem not to be well suited to perform a fixed point argument. Thus, due to the Proposition \ref{prop_max_fourier_besov}, the Theorem \ref{thm_boussinesq_L1_alpha} contains the existing previous results where the initial data is studied in the framework of Fourier-based functional spaces. Nevertheless, and just as for the classical Navier-Stokes equations, the problem of finding the biggest functional framework (for initial data in the Fourier-based setting) where to construct a critical mild solution seems to be a very challenging open problem for the fractional Navier-Stokes-Boussinesq system studied here.\\

%%%%%%%%%%%%%%%%%%%%%%%%%%%%%%%%%%%%%%%%%%%%%%%%%%%%%%%%%%%%%%%%%%%%%%%%%%%%%%%%%%%%%%%%%%%%%%%%%%%%%%
The structure of the article is the following. In Section 2, we recall the definition 	and some useful estimates for Lorentz spaces, in Section 3 we prove the Theorem \ref{thm_boussinesq_simple} and in Section 4 we prove the Theorem \ref{thm_boussinesq_L1_alpha}. As announced, the Appendix \ref{Appendix1} is devoted to the proof of the Proposition \ref{prop_max_fourier_besov}, while in the Appendix \ref{Appendix2} we will study the relationship between the resolution space $L^{\frac{\alpha}{\alpha-1}, \infty}_t(L^1_\xi)$ and the parabolic Morrey spaces used in \cite{Chamorro_Mansais1}.
%%%%%%%%%%%%%%%%%%%%%%%%%%%%%%%%%%%%%%%%%%%%%%%%%%%%%%%%%%%%%%%%%%%%%%%%%%%%%%%%%%%%%%%%%%%%%%%%%%%%%%
\section{Properties of Lorentz spaces}\label{Secc_LorentzProperties}
%%%%%%%%%%%%%%%%%%%%%%%%%%%%%%%%%%%%%%%%%%%%%%%%%%%
We recall here the main characterizations as well as some properties of the Lorentz spaces which are the main tool of this work, for more details see the book \cite{Chamorro_Lorentz}. For a function $f:\mathbb{R}^d\longrightarrow \mathbb{R}$, we consider first its distribution function $d_f(\lambda)=\left|\{x\in \mathbb{R}^d: |f(x)|>\lambda\} \right|$, for any $\lambda>0$. Then, for a parameter $1\leq p<+\infty$, we define the Lorentz space $L^{p,\infty}(\mathbb{R}^d)$ by the condition
\begin{equation}\label{Def_Lorentz1}
\|f\|_{L^{p,\infty}}=\underset{\lambda >0}{\sup}\left\{\lambda\; d_{f}^{\frac1p}(\lambda)\right\}=\underset{\lambda >0}{\sup}\left\{\lambda\;|\{x\in \mathbb{R}^d: |f(x)|>\lambda\}|^{\frac1p}\right\}<+\infty.
\end{equation}
In the case when $p=+\infty$, we simply set $L^{\infty, \infty}(\mathbb{R}^d)=L^\infty(\mathbb{R}^d)$. We will need another characterization of the Lorentz spaces and we introduce now the decreasing rearrangement function of $f$ by the expression $f^*(t) = \underset{\alpha>0}{\inf} \{d_f(\alpha) \leq t\}$. 
Thus, for any $1\leq p<+\infty$ and $1\leq q<+\infty$, we define the Lorentz space $L^{p,q}(\mathbb{R}^d)$ by the condition 
$$\|f\|_{L^{p,q}}=\left(\int_{0}^{+\infty}\left(t^{\frac 1p }f^{\ast}(t)\right)^{q}\frac{dt}{t}\right)^{\frac 1q},$$
when $q=+\infty$, we have 
\begin{equation}\label{Lorentzinfty}
\|f\|_{L^{p,\infty}}=\underset{t>0}{\sup} \;\left\{ t^{\frac 1p}f^{\ast}(t)\right\},
\end{equation}
and these characterizations are equivalent. We recall that we have the following homogeneity property: for $\lambda>0$, if we set $f_\lambda(x)=f(\lambda x)$, we have 
\begin{equation}\label{HomogeneiteLorentz}
\|f_\lambda\|_{L^{p,q}}=\lambda^{-\frac{d}{p}}\|f\|_{L^{p,q}},
\end{equation}
for all $1\leq p, q\leq +\infty$. Note moreover that if $f(x)=|x|^{-\frac{d}{p}}$ then we have $f\in L^{p,\infty}(\mathbb{R}^d)$ (we will use this particular function later on) and this type of function is the simplest example that shows that we have $L^p(\mathbb{R}^d)\subsetneq L^{p,\infty}(\mathbb{R}^d)$.\\

Recall now that we have the H\"older inequality
\begin{equation}\label{Holder1}
\|fg\|_{L^{p,\infty}}\leq \|f\|_{L^{p_0,\infty}}\|g\|_{L^{p_1,\infty}},
\end{equation}
where $\frac{1}{p}=\frac{1}{p_0}+\frac{1}{p_1}$ with $1\leq p, p_0, p_1<+\infty$ (see \cite[Corolario 1.1.3]{Chamorro_Lorentz}).  We also have the following version of the H\"older inequalities
\begin{equation}\label{Holder2}
\|fg\|_{L^{1}}\leq C\|f\|_{L^{p_0,1}}\|g\|_{L^{p_1,\infty}},
\end{equation}
where $1=\frac{1}{p_0}+\frac{1}{p_1}$ with $1< p, p_0, p_1<+\infty$ (see \cite[Teorema 1.2.6]{Chamorro_Lorentz}).\\

For the Young inequality for convolution we have
\begin{equation}\label{Young1}
\|f\ast g\|_{L^{p,\infty}}\leq \|f\|_{L^{p_0,\infty}}\|g\|_{L^{p_1,\infty}},
\end{equation}
with $1+\frac{1}{p}=\frac{1}{p_0}+\frac{1}{p_1}$ and where $1<p, p_0, p_1<+\infty$ (see \cite[Teorema 1.4.8]{Chamorro_Lorentz}).  A particular case of the Young inequalities is the following (see \cite[Teorema 1.1.6]{Chamorro_Lorentz}):
\begin{equation}\label{Young2}
\|f*g\|_{L^{p,\infty}}\leq C\|f\|_{L^{1}}\|g\|_{L^{p,\infty}}.
\end{equation}
%%%%%%%%%%%%%%%%%%%%%%%%%%%%%%%%%%%%%%%%%%%%%%%%%%%
\begin{Remarque}\label{Remarque1}
Note however that we do not have the estimate: 
$$\|f*g\|_{L^{p,\infty}}\leq C\|f\|_{L^{1,\infty}}\|g\|_{L^{p,\infty}}.$$
\end{Remarque}
Indeed, over $\mathbb{R}$, if we consider $f(x)=\frac{1}{|x|}$ and  $g(x)=\mathds{1}_{[0,1]}(x)$, we have $\|f\|_{L^{1,\infty}}<+\infty$ as well as $\|g\|_{L^{p,\infty}}<+\infty$ for all $1\leq p\leq+\infty$ but $f\ast g$ is not well defined as we have $f\ast g\equiv +\infty$ over the interval $[0,1]$.\\ 

This particular behavior of the Lorentz space $L^{1,\infty}$ with respect of the convolution makes this space unsuitable (to the best of our knowledge) for our calculations.

%%%%%%%%%%%%%%%%%%%%%%%%%%%%%%%%%%%%%%%%%%%%%%%%%%%%%%%%%%%%%%%%%%%%%%%%%%%%%%%%%%%%%%%%%%%%%%%%%%%%%%
\section{Proof of the Theorem \ref{thm_boussinesq_simple}}\label{Secc_ProofTh1}
%%%%%%%%%%%%%%%%%%%%%%%%%%%%%%%%%%%%%%%%%%%%%%%%%%%%%%%%%%%%%%%%%%%%%%%%%%%%%%%%%%%%%%%%%%%%%%%%%%%%%%
First note that the integral formulation (\ref{eq_Ptfix}) can be rewritten in the following manner
\begin{equation}\label{Equation_PointFixe}
\vvU=\vvU_0+\vvF+L(\vvU)+B(\vvU,\vvU),
\end{equation}
where $\vvU=\left[\begin{array}{c}\vu\\ \theta\end{array}\right]$ is a $d+1$ vector, $\vvU_0 = \left[\begin{array}{c}
\mathfrak{p}_t\ast \vec{u_0} \\\mathfrak{p}_t\ast \theta_0\end{array}\right]$ is the initial data, $\vvF = \left[\begin{array}{c} \displaystyle{\int_0^t} \mathfrak{p}_{t-s}\ast\mathbb{P}(\vec{f})ds\\
0\end{array}\right]$ is the given external force, the quantity $L(\vvU)=\left[\begin{array}{c} \displaystyle{\int_0^t} \mathfrak{p}_{t-s}\ast\mathbb{P}(\theta \vec{e}_d)ds\\  0\end{array}\right]$ is a linear term and finally $B(\vvU,\vvU)=\left[\begin{array}{c} -\displaystyle{\int_0^t} \mathfrak{p}_{t-s}\ast\mathbb{P}\left(\mathrm{div}(\vec{u}\otimes \vec{u})\right)ds\\     -\displaystyle{\int_0^t} \mathfrak{p}_{t-s}\ast \mathrm{div}(\theta \vec{u})ds\end{array}\right]$ is a bilinear term.\\ 
 
\noindent The equations of the form (\ref{Equation_PointFixe}) can be easily studied as we have the following version of the Banach fixed point theorem: 
%%%%%%%%%%%%%%%%%%%%%%%%%%%%%%%%%%%%%%%%%%%%%%%%%%%
\begin{Lemme}\label{Lemme_Banach_Picard}
Let $(E,\|\cdot\|_E)$ be a Banach space. Consider $\vvU_0 \in E$, let $L:E \longrightarrow E$ be a linear operator and $B:E\times E \longrightarrow E$ be a bilinear operator that satisfy the estimates
\begin{eqnarray*}
\|L(\vvU)\|_E &\leq &C_L \|\vvU\|_E,\\[2mm]
\|B(\vvU,\vec{\mathcal{V}})\|_E &\leq &C_B \|\vvU\|_E \|\vec{\mathcal{V}}\|_E, 
\end{eqnarray*}
 for all $\vvU, \vec{\mathcal{V}}\in E$. Assume moreover that $C_L<\frac{1}{3}$. If  we have $\|\vvU_0\|_E \leq \delta/2$ and $\|\vec{\mathcal{F}}\|_E \leq \delta/2$ for a constant $\delta>0$ such that $\delta < \frac{1}{9C_B}$, then there exists a unique $\vvU \in E$ such that $\|\vvU\|_E\leq 3 \delta$ which is a solution of the equation (\ref{Equation_PointFixe}).
\end{Lemme}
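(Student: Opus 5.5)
We will prove Lemma \ref{Lemme_Banach_Picard} with the Picard iteration scheme, or equivalently by applying the classical contraction principle on a closed ball. Set $X=\{\vvU\in E:\ \|\vvU\|_E\leq 3\delta\}$, which is a complete metric space since it is closed in the Banach space $E$. Define the map
$$\Phi(\vvU)=\vvU_0+\vvF+L(\vvU)+B(\vvU,\vvU).$$
Solutions of (\ref{Equation_PointFixe}) are exactly the fixed points of $\Phi$. It is therefore enough to show that $\Phi$ maps $X$ into itself and is a strict contraction on $X$.

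\textbf{Stability of the ball.} For $\vvU\in X$, the triangle inequality and the hypotheses give
$$\|\Phi(\vvU)\|_E\leq \tfrac{\delta}{2}+\tfrac{\delta}{2}+C_L\, 3\delta+C_B\,9\delta^2 .$$
Since $C_L<\frac13$, we have $3C_L\delta<\delta$. Since $\delta<\frac{1}{9C_B}$, we have $9C_B\delta^2<\delta$. Hence $\|\Phi(\vvU)\|_E<3\delta$, and $\Phi(X)\subset X$.

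\textbf{Contraction.} For $\vvU,\vec{\mathcal{V}}\in X$, bilinearity gives
$$B(\vvU,\vvU)-B(\vec{\mathcal{V}},\vec{\mathcal{V}})=B(\vvU-\vec{\mathcal{V}},\vvU)+B(\vec{\mathcal{V}},\vvU-\vec{\mathcal{V}}).$$
Combined with the linearity of $L$, this yields
$$\|\Phi(\vvU)-\Phi(\vec{\mathcal{V}})\|_E\leq \big(C_L+6C_B\delta\big)\|\vvU-\vec{\mathcal{V}}\|_E .$$
Using $C_L<\frac13$ and $6C_B\delta<\frac{6}{9}=\frac23$, the Lipschitz constant satisfies $k=C_L+6C_B\delta<1$. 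The strict inequalities in the hypotheses are exactly what is needed here. The Banach fixed point theorem then provides a unique fixed point in $X$, which is the desired solution with $\|\vvU\|_E\leq 3\delta$.

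The only point needing care is this contraction constant: the constants $3\delta$ and $\frac{1}{9C_B}$ are tuned so that both $C_L\cdot 3\delta+9C_B\delta^2\leq 2\delta$ and $C_L+6C_B\delta<1$ hold. No other obstacle is expected.
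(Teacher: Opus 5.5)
Your argument is correct: the closed ball of radius $3\delta$ is mapped into itself because $\tfrac{\delta}{2}+\tfrac{\delta}{2}+3C_L\delta+9C_B\delta^2<3\delta$, and on this ball the map is a contraction with constant $C_L+6C_B\delta<\tfrac13+\tfrac23=1$, which gives existence and uniqueness of the solution in that ball. The paper does not prove this lemma itself but cites Chamorro's book for it, and your contraction-principle (equivalently, Picard iteration) argument is the standard proof of this result.
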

%%%%%%%%%%%%%%%%%%%%%%%%%%%%%%%%%%%%%%%%%%%%%%%%%%%
See \cite[Théorème 7.1.1]{Chamorro_Livre} for a proof of this lemma.\\

\noindent For the vector $\vvU=\left[\begin{array}{c}\vu\\ \theta\end{array}\right]$, as announced in the Theorem \ref{thm_boussinesq_simple}, we will consider as resolution space the functional space $E=L^{p,\infty}_t(L^{q,\infty}_\xi)\times L^{\mathfrak{p},\infty}_t(L^{\mathfrak{q},\infty}_\xi)$ endowed with the norm 
\begin{equation}\label{Def_NormeGlobale}
\|\vvU\|_{E}=\left\|\left[\begin{array}{c}\vu\\ \theta\end{array}\right]\right\|_{E}=\|\vu\|_{L^{p,\infty}_t(L^{q,\infty}_\xi)}+\mathfrak{C}\|\theta\|_{ L^{\mathfrak{p},\infty}_t(L^{\mathfrak{q},\infty}_\xi)},
\end{equation}
where $\mathfrak{C}=\mathfrak{C}(\alpha,d)\gg 1$ is a constant that will be fixed later and whose sole role is to absorb properly some of the constants in the previous Lemma \ref{Lemme_Banach_Picard} (see also our recent work \cite{Chamorro_Mansais1}). In this context, we thus need to prove the following estimates:
\begin{itemize}
\item[(1)] For the initial data $\vvU_0$:
\begin{eqnarray}
\|\vvU_0\|_{E}= \left\|\left[\begin{array}{c}
\mathfrak{p}_t\ast \vec{u_0} \\\mathfrak{p}_t\ast \theta_0\end{array}\right]\right\|_{E}&=&\|\mathfrak{p}_t\ast\vu_0\|_{L^{p,\infty}_t(L^{q,\infty}_\xi)}+\mathfrak{C}\|\mathfrak{p}_t\ast\theta_0\|_{ L^{\mathfrak{p},\infty}_t(L^{\mathfrak{q},\infty}_\xi)}\notag\\
&\leq & C\|\vu_0\|_{\dot{\mathscr{B}}^{-\frac{\alpha}{p},\infty}_{L^{q,\infty}_\xi}}+C\|\theta_0\|_{\dot{\mathscr{B}}^{-\frac{\alpha}{\mathfrak{p}},\infty}_{L^{\mathfrak{q},\infty}_\xi}}.\label{Estimation_DonneesInitiales}
\end{eqnarray}
\item[(2)] For the external force $\vvF$:
\begin{equation}
\|\vvF\|_{E} =\left\|\left[\begin{array}{c} \displaystyle{\int_0^t} \mathfrak{p}_{t-s}\ast\mathbb{P}(\vec{f})ds\\
0\end{array}\right]\right\|_{E}=\left\|\int_0^t\mathfrak{p}_{t-s}\ast\mathbb{P}(\vec{f})ds\right\|_{L^{p,\infty}_t(L^{q,\infty}_\xi)}\leq C\|\vec{f}\|_{L^{{\bf p},\infty}_t(\mathcal{F}^{\mathfrak{s}}_{L^{{\bf q},\infty}_\xi})}.\label{Estimation_Force}
\end{equation}
\item[(3)] For the Linear term $L(\vvU)$
\begin{eqnarray}
\|L(\vvU)\|_{E}=\left\|\left[\begin{array}{c} \displaystyle{\int_0^t}\mathfrak{p}_{t-s}\ast\mathbb{P}(\theta \vec{e}_d)ds\\  0\end{array}\right]\right\|_{E}&=&\left\|\int_0^t\mathfrak{p}_{t-s}\ast\mathbb{P}(\theta \vec{e}_d)ds\right\|_{L^{p,\infty}_t(L^{q,\infty}_\xi)}\notag \\
&\leq & C \|\theta\|_{L^{\mathfrak{p},\infty}_t(L^{\mathfrak{q},\infty}_\xi)}.\label{Estimation_Lineaire}
\end{eqnarray}
\item[(4)] For the Bilinear term $B(\vvU,\vvU)$
\begin{eqnarray}
\|B(\vvU,\vvU)\|_{E}&=&\left\|\left[\begin{array}{c} -\displaystyle{\int_0^t} \mathfrak{p}_{t-s}\ast\mathbb{P}\left(\mathrm{div}(\vec{u}\otimes \vec{u})\right)ds\\     -\displaystyle{\int_0^t} \mathfrak{p}_{t-s}\ast \mathrm{div}(\theta \vec{u})ds\end{array}\right]\right\|_{E}\notag\\
&=&\left\|\int_0^t\mathfrak{p}_{t-s}\ast\mathbb{P}\left(\mathrm{div}(\vec{u}\otimes \vec{u})\right)ds\right\|_{L^{p,\infty}_t(L^{q,\infty}_\xi)}+\mathfrak{C}\left\|\int_0^t \mathfrak{p}_{t-s}\ast \mathrm{div}(\theta \vec{u})ds\right\|_{L^{\mathfrak{p},\infty}_t(L^{\mathfrak{q},\infty}_\xi)}\notag\\
&\leq & C\|\vu\|_{L^{p,\infty}_t(L^{q,\infty}_\xi)}\|\vu\|_{L^{p,\infty}_t(L^{q,\infty}_\xi)}+C\|\vu\|_{L^{p,\infty}_t(L^{q,\infty}_\xi)}\|\theta\|_{L^{\mathfrak{p},\infty}_t(L^{\mathfrak{q},\infty}_\xi)}.\label{Estimation_Bilineaire}
\end{eqnarray}
\end{itemize}

The estimates (\ref{Estimation_DonneesInitiales}), (\ref{Estimation_Force}), (\ref{Estimation_Lineaire}) and (\ref{Estimation_Bilineaire}) will be treated separately.\\

\begin{itemize}
\item[(1)] We start with the initial data $\vu_0$ and $\theta_0$ and we will show that we have the controls 
$$\|\mathfrak{p}_t\ast\vu_0\|_{L^{p,\infty}_t(L^{q,\infty}_\xi)} \leq C \|\vu_0\|_{\dot{\mathscr{B}}^{-\frac{\alpha}{p},\infty}_{L^{q,\infty}_\xi}}\quad \mbox{and}\quad \|\mathfrak{p}_t\ast\theta_0\|_{ L^{\mathfrak{p},\infty}_t(L^{\mathfrak{q},\infty}_\xi)}\leq C\|\theta_0\|_{\dot{\mathscr{B}}^{-\frac{\alpha}{\mathfrak{p}},\infty}_{L^{\mathfrak{q},\infty}_\xi}},$$
but since these two terms have the same structure, for the sake of conciseness we will only prove the first estimate. \\

Let us first fix $t>0$ and $\lambda>0$. Since the sets $\{2^j \leq |\xi| < 2^{j+1}\}$ are mutually disjoint and cover $\mathbb{R}^d\setminus \{0\}$, we can write
$$    |\{\xi \ : \ e^{-t|\xi|^\alpha} |\widehat{\vu}_0(\xi)|>\lambda\}| =  \displaystyle{\sum_{j\in \mathbb{Z}}} |\{2^j\leq |\xi| < 2^{j+1} \ : \ |\widehat{\vu}_0(\xi)|>\lambda e^{t|\xi|^\alpha}\}|.$$
We now fix $j\in \mathbb{Z}$. By noticing that for $|\xi|\geq 2^j$ we have $e^{t|\xi|^\alpha}\geq e^{t2^{j\alpha}}$, we obtain the upper bound
$$|\{\xi \ : \ e^{-t|\xi|^\alpha} |\widehat{\vu}_0(\xi)|>\lambda\}| \leq \displaystyle{\sum_{j\in \mathbb{Z}}} |\{\xi \ : \ \mathds{1}_{\{2^j\leq |\xi| < 2^{j+1}\}}|\widehat{\vu}_0(\xi)|>\lambda e^{t2^{j\alpha}}\}|,$$
by taking the $L^{q,\infty}$ norm of the functions $\mathds{1}_{\{2^j\leq |\xi| < 2^{j+1}\}}\widehat{\vu}_0$, we obtain
$$|\{\xi \ : \ e^{-t|\xi|^\alpha |}\widehat{\vu}_0(\xi)|>\lambda\}| \leq  \sum_{j\in \mathbb{Z}} \left\|\mathds{1}_{\{2^j\leq |\xi| < 2^{j+1}\}}\widehat{\vu}_0(\xi)\right\|_{L^{q,\infty}}^q\times (\lambda e^{t2^{j\alpha}})^{-q}.$$
Since we have for the initial data $\vu_0\in \dot{\mathscr{B}}^{-\frac{\alpha}{p},\infty}_{L^{q,\infty}_\xi}(\mathbb{R}^d)$ by hypothesis and recalling the expression of the norm $\|\vu_0\|_{\dot{\mathscr{B}}^{-\frac{\alpha}{p},\infty}_{L^{q,\infty}_\xi}}=\underset{j\in \mathbb{Z}}{\sup} 2^{-j\frac{\alpha}{p}}\left\|\mathds{1}_{\{2^j\leq |\xi| < 2^{j+1}\}}\widehat{\vu}_0(\xi)\right\|_{L^{q,\infty}}$ given in the formula (\ref{NormFBL}), we can deduce the following control, valid for all $j \in \mathbb{Z}$
$$ \|\mathds{1}_{\{2^j\leq |\xi| < 2^{j+1}\}}\widehat{\vu}_0(\xi)\|_{L^{q,\infty}}  \leq  \|\vu_0\|_{ \dot{\mathscr{B}}^{-\frac{\alpha}{p},\infty}_{L^{q,\infty}_\xi}} 2^{j\frac{\alpha}{p}}.$$
We insert this estimate in the previous inequality to get
\begin{eqnarray*}
 |\{\xi \ : \ e^{-t|\xi|^\alpha} |\widehat{\vu}_0(\xi)|>\lambda\}| & \leq & \displaystyle{\sum_{j\in \mathbb{Z}}} \left(\|\vu_0\|_{ \dot{\mathscr{B}}^{-\frac{\alpha}{p},\infty}_{L^{q,\infty}_\xi}} 2^{j\frac{\alpha}{p}}\right)^q\times (\lambda e^{t2^{j\alpha}})^{-q}\\
& \leq & \lambda^{-q}\|\vu_0\|_{\dot{\mathscr{B}}^{-\frac{\alpha}{p},\infty}_{L^{q,\infty}_\xi}}^q \times \displaystyle{\sum_{j\in \mathbb{Z}}} 2^{j\frac{\alpha q}{p}}\times  e^{-qt2^{j\alpha}}\\
&\leq &\lambda^{-q} t^{-\frac{q}{p}}\|\vu_0\|_{ \dot{\mathscr{B}}^{-\frac{\alpha}{p},\infty}_{L^{q,\infty}_\xi}}^q \times \sum_{j\in \mathbb{Z}} (t2^{j\alpha})^{\frac{q}{p}}\times  e^{-qt2^{j\alpha}}.
\end{eqnarray*}
At this point, we take in the previous expression the $\frac{1}{q}-$th power and the supremum with respect to $\lambda>0$ in order to reconstruct the Lorentz norm $L^{q,\infty}$ (recall the expression (\ref{Def_Lorentz1}) above) and we have
\begin{eqnarray*}
\lambda  |\{\xi \ : \ e^{-t|\xi|^\alpha} |\widehat{\vu}_0(\xi)|>\lambda\}|^{\frac{1}{q}}&\leq &  t^{-\frac{1}{p}}\|\vu_0\|_{ \dot{\mathscr{B}}^{-\frac{\alpha}{p},\infty}_{L^{q,\infty}_\xi}}\left( \displaystyle{\sum_{j\in \mathbb{Z}}} (t2^{j\alpha})^{\frac{q}{p}}\times  e^{-qt2^{j\alpha}}\right)^\frac{1}{q}\\
\|e^{-t|\xi|^\alpha} |\widehat{\vu}_0|\|_{L^{q,\infty}} & \leq &  t^{-\frac{1}{p}}\|\vu_0\|_{ \dot{\mathscr{B}}^{-\frac{\alpha}{p},\infty}_{L^{q,\infty}_\xi}}\left( \displaystyle{\sum_{j\in \mathbb{Z}}} (t2^{j\alpha})^{\frac{q}{p}}\times  e^{-qt2^{j\alpha}}\right)^\frac{1}{q}.
\end{eqnarray*}
We will now prove that the quantity $\displaystyle{\sum_{j\in \mathbb{Z}}} (t2^{j\alpha})^{\frac{q}{p}}\times  e^{-qt2^{j\alpha}}$ can be bounded uniformly independently from $t>0$. Indeed, for $t>0$, there exists a unique $k\in \mathbb{Z}$ such that $2^{-k\alpha} \leq t <2^{-k\alpha+\alpha}$, we thus have $(t2^{j\alpha})^\frac{q}{p}e^{-qt2^{j\alpha}} \leq 2^{(-k\alpha+\alpha+j\alpha)\frac{q}{p}}e^{-q2^{-k\alpha+j\alpha}}=2^{\frac{\alpha q}{p}} 2^{\frac{\alpha q}{p}(j-k)} e^{-q2^{\alpha(j-k)}}$ and we use this upper bound to obtain
$$\sum_{j\in \mathbb{Z}} (t2^{j\alpha})^{\frac{q}{p}}\times  e^{-qt2^{j\alpha}} \leq 2^{\frac{\alpha q}{p}}  \sum_{j\in \mathbb{Z}} 2^{\frac{\alpha q}{p}(j-k)} e^{-q2^{\alpha(j-k)}},$$
and by a change of variables in the sum (from $j$ to $j-k$), we obtain 
$$\sum_{j\in \mathbb{Z}} (t2^{j\alpha})^{\frac{q}{p}}\times  e^{-qt2^{j\alpha}} \leq  2^{\frac{\alpha q}{p}}  \sum_{j\in \mathbb{Z}} 2^{\frac{\alpha q}{p}j} e^{-q2^{\alpha j}} =C(\alpha,p,q)<+\infty,$$
since $2^{\frac{\alpha q}{p}j} e^{-q2^{\alpha j}} \underset{j \rightarrow -\infty}{\sim} 2^{\frac{\alpha q }{p}j}$ and $2^{\frac{\alpha q}{p}j} e^{-q2^{\alpha j}} \underset{j \rightarrow +\infty}{=} \mathcal{O}(e^{-j})$.
With these computations we can write now 
\begin{eqnarray*}
\|e^{-t|\xi|^\alpha} |\widehat{\vu}_0|\|_{L^{q,\infty}} &\leq & t^{-\frac{1}{p}}\|\vu_0\|_{ \dot{\mathscr{B}}^{-\frac{\alpha}{p},\infty}_{L^{q,\infty}_\xi}}\left( \displaystyle{\sum_{j\in \mathbb{Z}}} (t2^{j\alpha})^{\frac{q}{p}}\times  e^{-qt2^{j\alpha}}\right)^\frac{1}{q}\\
&\leq &C  t^{-\frac{1}{p}}\|\vu_0\|_{ \dot{\mathscr{B}}^{-\frac{\alpha}{p},\infty}_{L^{q,\infty}_\xi}}.
\end{eqnarray*}
Since the function $h:t\mapsto \|e^{-t|\xi|^\alpha}|\widehat{\vu}_0|\|_{L^{q,\infty}_\xi}$ is non increasing, it is equal to its own non increasing rearrangement, and taking the supremum of this estimate with respect to $t>0$, we find the quasi-norm $L^{p,\infty}_t$: 
$$\|e^{-t|\xi|^\alpha}|\widehat{\vu}_0|\|_{L^{p,\infty}_t(L^{q,\infty})}  \leq C \|\vu_0\|_{ \dot{\mathscr{B}}^{-\frac{\alpha}{p},\infty}_{L^{q,\infty}_\xi}},$$
from which we easily deduce the wished estimate
$$\|\mathfrak{p}_t\ast\vu_0\|_{L^{p,\infty}_t(L^{q,\infty}_\xi)}  \leq C \|\vu_0\|_{ \dot{\mathscr{B}}^{-\frac{\alpha}{p},\infty}_{L^{q,\infty}_\xi}}.$$
The same computations will lead to the control 
$$\|\mathfrak{p}_t\ast\theta_0\|_{L^{\mathfrak{p},\infty}_t(L^{\mathfrak{q},\infty}_\xi)}  \leq C \|\theta_0\|_{ \dot{\mathscr{B}}^{-\frac{\alpha}{\mathfrak{p}},\infty}_{L^{\mathfrak{q},\infty}_\xi}},$$
and we finally obtain the estimates (\ref{Estimation_DonneesInitiales}) for the initial data.
%%%%%%%%%%%%%%%%%%%%%%%%%%%%%%%%%%%%%%%%%%%%%%%%%%%
\item[(2)] Next, we consider the external force and we will proof the estimate 
$$\left\|\int_0^t\mathfrak{p}_{t-s}\ast\mathbb{P}(\vec{f})ds\right\|_{L^{p,\infty}_t(L^{q,\infty}_\xi)}\leq  C\|\vec{f}\|_{L^{{\bf p},\infty}_t(\mathcal{F}^{\mathfrak{s}}_{L^{{\bf q},\infty}_\xi})}.$$
This estimate will be a consequence of the following result.

%%%%%%%%%%%%%%%%%%%%%%%%%%%%%%%%%%%%%%%%%%%%%%%%%%%
\begin{Proposition}\label{prop_estm_force_red}
Let $d\geq 2$ and $0<\alpha\leq 2$ be fixed. Consider $1<p_1, p_2, q_1, q_2<+\infty$ and $0<\mathfrak{s}$ some real parameters such that we have the conditions $q_2<q_1$ and $p_1<p_2$, $0<\mathfrak{s}<d(\frac{1}{q_2}-\frac{1}{q_1})$ as well as the relationship
$\frac{1}{\alpha}\left(d(\frac{1}{q_2}-\frac{1}{q_1})-\mathfrak{s}\right)=\left(1+\frac{1}{p_2}-\frac{1}{p_1}\right)$. For $\sigma_0(D)$ a pseudo-differential operator with homogeneous symbol of degree $0$ smooth outside the origin, there exists a constant $0<\mathcal{C}<+\infty$ depending on all parameters and $\sigma_0$ such that for all $\vec{\varphi}:[0,+\infty[\times\mathbb{R}^d\longrightarrow \mathbb{R}^d$ with $\vec{\varphi} \in L^{p_1,\infty}_t(\mathcal{F}^{\mathfrak{s}}_{L^{q_1,\infty}_\xi})$, we have the estimate
$$\left \| \int_0^t\mathfrak{p}_{t-s}*\sigma_0(D)(\vec{\varphi})(s,\cdot)ds \right\|_{L^{p_2,\infty}_t(L^{q_2,\infty}_\xi)} \leq \mathcal{C}\| \vec{\varphi}\|_{L^{p_1,\infty}_t(\mathcal{F}^{\mathfrak{s}}_{L^{q_1,\infty}_\xi})}.$$
\end{Proposition}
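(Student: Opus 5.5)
We work entirely at the Fourier level. Since $\sigma_0$ is homogeneous of degree $0$ and smooth outside the origin, it is bounded on the sphere, so $|\sigma_0(\xi)|\leq C$. Taking the Fourier transform in $x$ then gives the pointwise bound
$$\left|\mathcal{F}_x\Big(\int_0^t\mathfrak{p}_{t-s}*\sigma_0(D)\vec{\varphi}(s,\cdot)ds\Big)(\xi)\right|\leq C\int_0^t e^{-(t-s)|\xi|^\alpha}|\xi|^{-\mathfrak{s}}\, g(s,\xi)\,ds,$$
where $g(s,\xi)=|\xi|^{\mathfrak{s}}|\widehat{\vec{\varphi}}(s,\xi)|$. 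By hypothesis, $G(s):=\|g(s,\cdot)\|_{L^{q_1,\infty}}$ belongs to $L^{p_1,\infty}_t$.

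The first step is to take the $L^{q_2,\infty}_\xi$ norm inside the time integral. The quasi-norm $L^{q_2,\infty}$ is equivalent to a norm, since $q_2>1$, so Minkowski's integral inequality is available up to a constant. We then apply the Hölder inequality (\ref{Holder1}) with $\frac{1}{q_2}=\frac{1}{q_1}+\frac{1}{r}$:
$$\big\|e^{-(t-s)|\xi|^\alpha}|\xi|^{-\mathfrak{s}}g(s,\cdot)\big\|_{L^{q_2,\infty}}\leq \big\|e^{-(t-s)|\xi|^\alpha}|\xi|^{-\mathfrak{s}}\big\|_{L^{r,\infty}}\,G(s).$$
The condition $0<\mathfrak{s}<d(\frac1{q_2}-\frac1{q_1})=\frac dr$ makes $|\xi|^{-\mathfrak{s}}$ locally fine in $L^{r,\infty}$, while the exponential handles large $|\xi|$. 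By the dilation identity (\ref{HomogeneiteLorentz}), the substitution $\xi=(t-s)^{-1/\alpha}\eta$ yields
$$\big\|e^{-(t-s)|\xi|^\alpha}|\xi|^{-\mathfrak{s}}\big\|_{L^{r,\infty}}=C\,(t-s)^{\frac{\mathfrak{s}}{\alpha}-\frac{d}{\alpha r}}=C\,(t-s)^{-\gamma},\qquad \gamma=\frac1\alpha\Big(\frac dr-\mathfrak{s}\Big)=1+\frac1{p_2}-\frac1{p_1}.$$
Here $C=\|e^{-|\eta|^\alpha}|\eta|^{-\mathfrak{s}}\|_{L^{r,\infty}}$ is finite precisely because $\mathfrak{s}r<d$.

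Combining the two steps gives
$$\|\cdot\|_{L^{q_2,\infty}_\xi}(t)\leq C\int_0^t (t-s)^{-\gamma}G(s)\,ds\leq C\,\big(|\cdot|^{-\gamma}*G\big)(t),$$
which is a one-dimensional Riesz potential (extend $G$ by $0$ on $]-\infty,0[$). Since $p_1<p_2$ we have $0<\gamma<1$, and $|\tau|^{-\gamma}\in L^{1/\gamma,\infty}(\mathbb{R})$. We finish with the Young inequality (\ref{Young1}) in the time variable, using exponents $p_0=1/\gamma$ and $p_1$ with $1+\frac1{p_2}=\gamma+\frac1{p_1}$; this is exactly the scaling relation in the hypothesis. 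All exponents lie in $]1,+\infty[$: we have $\gamma<1$, and $\gamma>0$ holds because $\frac{1}{p_1}-\frac{1}{p_2}<1$. We conclude that the result is bounded by $C\|G\|_{L^{p_1,\infty}_t}$, as claimed.

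The main obstacle is the first step. We need to justify the Minkowski inequality in $L^{q_2,\infty}$, which requires replacing the quasi-norm by the equivalent norm $\sup_{|E|}|E|^{\frac1{q_2}-1}\int_E|f|$. We also need measurability of $G$ and of the time integral. The remaining steps are scaling computations together with the cited Lorentz Hölder and Young inequalities.
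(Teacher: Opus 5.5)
Your proposal is correct and follows essentially the same route as the paper. Both arguments use a pointwise Fourier bound from the boundedness of $\sigma_0$, then Minkowski and the Lorentz H\"older inequality with $\frac{1}{q_2}=\frac{1}{r}+\frac{1}{q_1}$, then the scaling bound $(t-s)^{-\gamma}$, and finally the one-dimensional Lorentz Young inequality with $1+\frac{1}{p_2}=\gamma+\frac{1}{p_1}$. Your justification of Minkowski's inequality via the equivalent norm on $L^{q_2,\infty}$, and your check that $0<\gamma<1$ so that all exponents lie in $]1,+\infty[$, make explicit two points the paper uses without comment.
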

%%%%%%%%%%%%%%%%%%%%%%%%%%%%%%%%%%%%%%%%%%%%%%%%%%%
{\bf Proof.} For a $t>0$ fixed, we start by considering the quantity 
$$\left|\mathcal{F}\left(\int_0^t\mathfrak{p}_{t-s}*\sigma_0(D)(\vec{\varphi})(s,\cdot)ds\right)(\xi)\right|\leq C\int_0^te^{-(t-s)|\xi|^\alpha}|\widehat{\vec{\varphi}}|(s, \xi)ds,$$
where we used the fact that the symbol $\sigma_0(D)$ is bounded in the Fourier level. Then applying the functional $\|\cdot\|_{{L^{q_2,\infty}_\xi}}$ to this expression we can write 
\begin{eqnarray*}
\left\| \int_0^t\mathfrak{p}_{t-s}*\sigma_0(D)(\vec{\varphi})(s,\cdot)ds \right\|_{L^{q_2,\infty}_\xi} &\leq &C\left\|\int_0^te^{-(t-s)|\xi|^\alpha}|\widehat{\vec{\varphi}}|(s, \cdot)ds \right\|_{L^{q_2,\infty}} \\
&\leq &C\int_0^t\left\|e^{-(t-s)|\xi|^\alpha}|\xi|^{-{\mathfrak{s}}}|\xi|^{\mathfrak{s}} |\widehat{\vec{\varphi}}|(s, \cdot)\right\|_{L^{q_2,\infty}}ds,
\end{eqnarray*}
for some parameter ${\mathfrak{s}}$ such that $0\leq {\mathfrak{s}}<d(\frac{1}{q_2}-\frac{1}{q_1})$. Now, by the H\"older inequality (\ref{Holder1}) in Lorentz spaces with $\frac{1}{q_2}=\frac{1}{r}+\frac{1}{q_1}$, with $1<r<+\infty$ and $1<q_2<q_1<+\infty$, we obtain 
$$\left\| \int_0^t\mathfrak{p}_{t-s}*\sigma_0(D)(\vec{\varphi})(s,\cdot)ds \right\|_{L^{q_2,\infty}_\xi} \leq C \int_0^t\left\|e^{-(t-s)|\xi|^\alpha}|\xi|^{-\mathfrak{s}}\right\|_{L^{r,\infty}}\left\||\xi|^\mathfrak{s} |\widehat{\vec{\varphi}}|(s, \cdot)\right\|_{L^{q_1,\infty}}ds.$$
At this point we remark that we have the control 
$$\left\|e^{-(t-s)|\xi|^\alpha}|\xi|^{-\mathfrak{s}}\right\|_{L^{r,\infty}}\leq C(t-s)^{-\frac{1}{\alpha}(\frac{d}{r}-\mathfrak{s})},$$
indeed, we can write $\left\|e^{-(t-s)|\xi|^\alpha}|\xi|^{-\mathfrak{s}}\right\|_{L^{r,\infty}}=(t-s)^{\frac{\mathfrak{s}}{\alpha}}\left\|e^{-|(t-s)^{\frac{1}{\alpha}}\xi|^\alpha}|(t-s)^{\frac{1}{\alpha}}\xi|^{-\mathfrak{s}}\right\|_{L^{r,\infty}}$ and by the homogeneous properties of the Lorentz space $L^{r,\infty}(\mathbb{R}^d)$ (recall the property (\ref{HomogeneiteLorentz}) above) we obtain the identities\\ 
$\left\|e^{-(t-s)|\xi|^\alpha}|\xi|^{-\mathfrak{s}}\right\|_{L^{r,\infty}}=(t-s)^{\frac{\mathfrak{s}}{\alpha}}\left\|e^{-|(t-s)^{\frac{1}{\alpha}}\xi|^\alpha}|(t-s)^{\frac{1}{\alpha}}\xi|^{-\mathfrak{s}}\right\|_{L^{r,\infty}}=(t-s)^{-\frac{1}{\alpha}(\frac{d}{r}-\mathfrak{s})}\left\|e^{-|\xi|^\alpha}|\xi|^{-\mathfrak{s}}\right\|_{L^{r,\infty}}$ and to finish we remark that $\left\|e^{-|\xi|^\alpha}|\xi|^{-\mathfrak{s}}\right\|_{L^{r,\infty}}\leq C\left\|e^{-|\xi|^\alpha}|\xi|^{-\mathfrak{s}}\right\|_{L^{r}}\leq C<+\infty$ since $0\leq \mathfrak{s}<\frac{d}{r}$.\\

We can thus write 
$$\left\| \int_0^t\mathfrak{p}_{t-s}*\sigma_0(D)(\vec{\varphi})(s,\cdot)ds \right\|_{L^{q_2,\infty}_\xi} \leq C \int_0^t(t-s)^{-\frac{1}{\alpha}(\frac{d}{r}-\mathfrak{s})}\left\||\xi|^\mathfrak{s} |\widehat{\vec{\varphi}}|(s, \cdot)\right\|_{L^{q_1,\infty}}ds.$$
By extending by $0$ the function $\widehat{\varphi}(s, \cdot)$ when $s<0$, we can write the previous integral in the following manner
\begin{eqnarray*}
\int_0^t(t-s)^{-\frac{1}{\alpha}(\frac{d}{r}-\mathfrak{s})}\left\||\xi|^\mathfrak{s} |\widehat{\vec{\varphi}}|(s, \cdot)\right\|_{L^{q_1,\infty}}ds&=&\int_{-\infty}^{+\infty}(t-s)^{-\frac{1}{\alpha}(\frac{d}{r}-\mathfrak{s})}\mathds{1}_{\{t-s>0\}}\left\||\xi|^\mathfrak{s} |\widehat{\varphi}|(s, \cdot)\right\|_{L^{q_1,\infty}}ds\\
&=&\left[ s^{-\frac{1}{\alpha}(\frac{d}{r}-\mathfrak{s})}\mathds{1}_{\{s>0\}}\right]* \left[ \||\xi|^{\mathfrak{s}}|\widehat{\vec{\varphi}}|(\cdot, \cdot)\|_{L^{q_1,\infty}}\right] (t),
\end{eqnarray*}
and we obtain the estimate 
$$\left\| \int_0^t\mathfrak{p}_{t-s}*\sigma_0(D)(\vec{\varphi})(s,\cdot)ds \right\|_{L^{q_2,\infty}_\xi} \leq C\left[ s^{-\frac{1}{\alpha}(\frac{d}{r}-\mathfrak{s})}\mathds{1}_{\{s>0\}}\right]* \left[ \||\xi|^{\mathfrak{s}}|\widehat{\vec{\varphi}}|(\cdot, \cdot)\|_{L^{q_1,\infty}}\right] (t).$$
We apply now the $L^{p_2,\infty}_t$ norm in the variable $t$ to the previous expression and we obtain 
$$\left\|\left\| \int_0^t\mathfrak{p}_{t-s}*\sigma_0(D)(\vec{\varphi})(s,\cdot)ds \right\|_{L^{q_2,\infty}_\xi}\right\|_{L^{p_2,\infty}_t} \leq C\left\|\left[ s^{-\frac{1}{\alpha}(\frac{d}{r}-\mathfrak{s})}\mathds{1}_{\{s>0\}}\right]* \left[ \||\xi|^{\mathfrak{s}}|\widehat{\vec{\varphi}}|(\cdot, \cdot)\|_{L^{q_1,\infty}}\right] (t)\right\|_{L^{p_2,\infty}_t}.$$
We can then apply the Young inequality for Lorentz spaces on $\mathbb{R}$, with parameters $1+\frac{1}{p_2}=\frac{1}{\kappa} + \frac{1}{p_1}$ (see the estimate (\ref{Young1}) above) and we have 
$$\left\| \int_0^t\mathfrak{p}_{t-s}*\sigma_0(D)(\vec{\varphi})(s,\cdot)ds \right\|_{L^{p_2,\infty}_t(L^{q_2,\infty}_\xi)}\leq C\left\| s^{-\frac{1}{\alpha}(\frac{d}{r}-\mathfrak{s})}\mathds{1}_{\{s>0\}}\right\|_{L^{\kappa, \infty}_t} \left\| \||\xi|^{\mathfrak{s}}|\widehat{\vec{\varphi}}|(\cdot, \cdot)\|_{L^{q_1,\infty}} \right\|_{L^{p_1,\infty}_t},$$
which, using the definition of the space $L^{p_1,\infty}_t(\mathcal{F}^{\mathfrak{s}}_{L^{q_1,\infty}_\xi})$ given in the expression (\ref{Def_NormFHL}), we can rewrite as follows
$$\left\| \int_0^t\mathfrak{p}_{t-s}*\sigma_0(D)(\vec{\varphi})(s,\cdot)ds \right\|_{L^{p_2,\infty}_t(L^{q_2,\infty}_\xi)}\leq C\left\| s^{-\frac{1}{\alpha}(\frac{d}{r}-\mathfrak{s})}\mathds{1}_{\{s>0\}}\right\|_{L^{\kappa, \infty}_t} \|\vec{\varphi}\|_{L^{p_1,\infty}_t(\mathcal{F}^{\mathfrak{s}}_{L^{q_1,\infty}_\xi})}.$$
We only need to prove now that the quantity is bounded $\left\| s^{-\frac{1}{\alpha}(\frac{d}{r}-\mathfrak{s})}\mathds{1}_{\{s>0\}}\right\|_{L^{\kappa, \infty}_t}$. Recalling that we have by hypothesis the relationship $\frac{1}{\alpha}\left(d(\frac{1}{q_2}-\frac{1}{q_1})-\mathfrak{s}\right)=\left(1+\frac{1}{p_2}-\frac{1}{p_1}\right)$ as well as $\frac{1}{r}=\frac{1}{q_2}-\frac{1}{q_1}$ and $\frac{1}{\kappa}=1+\frac{1}{p_2}-\frac{1}{p_1}$, we have that $\frac{1}{\alpha}(\frac{d}{r}-\mathfrak{s})=\frac{1}{\kappa}$ and we obtain $\left\| s^{-\frac{1}{\alpha}(\frac{d}{r}-\mathfrak{s})}\mathds{1}_{\{s>0\}}\right\|_{L^{\kappa, \infty}_t}=\left\| s^{-\frac{1}{\kappa}}\mathds{1}_{\{s>0\}}\right\|_{L^{\kappa, \infty}_t}<+\infty$. We have thus proven the control 
$$\left\| \int_0^t\mathfrak{p}_{t-s}*\sigma_0(D)(\vec{\varphi})(s,\cdot)ds \right\|_{L^{p_2,\infty}_t(L^{q_2,\infty}_\xi)}\leq C\|\vec{\varphi}\|_{L^{p_1,\infty}_t(\mathcal{F}^{\mathfrak{s}}_{L^{q_1,\infty}_\xi})},$$
which is the wished estimate. \hfill $\blacksquare$\\
%%%%%%%%%%%%%%%%%%%%%%%%%%%%%%%%%%%%%%%%%%%%%%%%%%%

We are going to exploit this result. Indeed, if we set $p=p_2$, $q=q_2$ and ${\bf p}=p_1$, ${\bf q}=q_1$, since we have by hypothesis the relationship $\frac{1}{\alpha}\left(d(\frac{1}{q}-\frac{1}{\bf q})-\mathfrak{s}\right)=\left(1+\frac{1}{p}-\frac{1}{\bf p}\right)$, which is equivalent to the condition $\frac{1}{\bf p}-\frac{d}{\alpha {\bf q}}-\mathfrak{s}=1+\frac{1}{p}-\frac{d}{\alpha q}$, we may apply the previous proposition (due to the properties of the Leray projector in the Fourier variable, see \cite[Chapitre 5]{Chamorro_Livre}) to obtain the estimate:
$$\left\|\int_0^t\mathfrak{p}_{t-s}\ast\mathbb{P}(\vec{f})ds\right\|_{L^{p,\infty}_t(L^{q,\infty}_\xi)}\leq  C\|\vec{f}\|_{L^{{\bf p},\infty}_t(\mathcal{F}^{\mathfrak{s}}_{L^{{\bf q},\infty}_\xi})}.$$

\item[(3)] For the linear term we can apply the Proposition \ref{prop_estm_force_red} to obtain
$$\left\|\int_0^t\mathfrak{p}_{t-s}\ast\mathbb{P}(\theta \vec{e}_d)ds\right\|_{L^{p,\infty}_t(L^{q,\infty}_\xi)}\leq  C \|\theta\|_{L^{\mathfrak{p},\infty}_t(L^{\mathfrak{q},\infty}_\xi)},$$
since we have the relationship $\frac{d}{\alpha}(\frac{1}{q}-\frac{1}{\mathfrak{q}})=\left(1+\frac{1}{p}-\frac{1}{\mathfrak{p}}\right)$, which can be easily deduced from the condition $\frac{1}{\mathfrak{p}}-\frac{d}{\alpha \mathfrak{q}}=1+\frac{1}{p}-\frac{d}{\alpha q}$.

\item[(4)] For the bilinear term we need to study the following estimates
$$\left\|\int_0^t\mathfrak{p}_{t-s}\ast\mathbb{P}\left(\mathrm{div}(\vec{u}\otimes \vec{u})\right)ds\right\|_{L^{p,\infty}_t(L^{q,\infty}_\xi)}\leq C\|\vu\|_{L^{p,\infty}_t(L^{q,\infty}_\xi)}\|\vu\|_{L^{p,\infty}_t(L^{q,\infty}_\xi)},$$
and
$$\left\|\int_0^t \mathfrak{p}_{t-s}\ast \mathrm{div}(\theta \vec{u})ds\right\|_{L^{\mathfrak{p},\infty}_t(L^{\mathfrak{q},\infty}_\xi)}\leq C\|\vu\|_{L^{p,\infty}_t(L^{q,\infty}_\xi)}\|\theta\|_{L^{\mathfrak{p},\infty}_t(L^{\mathfrak{q},\infty}_\xi)}.$$

We will study these estimates with the following generic result.
%%%%%%%%%%%%%%%%%%%%%%%%%%%%%%%%%%%%%%%%%%%%%%%%%%%
\begin{Proposition}[bilinear estimate]\label{Prop_generalquadratic}
Let $d\geq2 $ and $1<\alpha<2$ be fixed. Consider now $1<p_1,p_2, p_3<+\infty$ and $1<q_1,q_2, q_3<+\infty$ be a set of parameters such that we have the relationship 
$$\frac{d}{\alpha}\left(1+\frac{1}{q_3}-(\frac{1}{q_1}+\frac{1}{q_2})\right)+\frac{1}{\alpha}=1+\frac{1}{p_3}-(\frac{1}{p_1}+\frac{1}{p_2}),$$ 
with $1<\frac{1}{q_1}+\frac{1}{q_2}<1+\frac{1}{q_3}$ and $\frac{1}{p_3}<\frac{1}{p_1}+\frac{1}{p_2}<1$, then there exists a constant $C>0$ depending on all parameters such that for all $f \in L^{p_1,\infty}_t (L^{q_1,\infty}_\xi)$ and $g \in L^{p_2,\infty}_t(L^{q_2,\infty}_\xi)$, we have the inequality 
$$\left\|\int_0^te^{-(t-s)|\xi|^\alpha}|\xi| \times \left[|\widehat{f}|*|\widehat{g}| \right] ds\right\|_{L^{p_3,\infty}_t(L^{q_3,\infty})} \leq  C \left\| \widehat{f}\right\|_{L^{p_1,\infty}_t(L^{q_1,\infty})}  \left\| \widehat{g}\right\|_{L^{p_2,\infty}_t(L^{q_2,\infty})}.$$
 \end{Proposition}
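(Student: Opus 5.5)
The plan is to mimic the proof of Proposition \ref{prop_estm_force_red}: first estimate, for fixed $s<t$, the space part in $L^{q_3,\infty}_\xi$ by Hölder and Young inequalities in Lorentz spaces, producing a power of $(t-s)$; then conclude with a Young inequality in the time variable.

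\emph{Space estimate.} Fix $0<s<t$. Define $q_0$ by $1+\frac{1}{q_0}=\frac{1}{q_1}+\frac{1}{q_2}$. The hypothesis $1<\frac{1}{q_1}+\frac{1}{q_2}<1+\frac{1}{q_3}$ gives $1<q_3<q_0<+\infty$. By the Young inequality (\ref{Young1}) on $\mathbb{R}^d$,
$$\big\||\widehat f(s)|*|\widehat g(s)|\big\|_{L^{q_0,\infty}}\leq C\|\widehat f(s)\|_{L^{q_1,\infty}}\|\widehat g(s)\|_{L^{q_2,\infty}}.$$
Next take $r$ with $\frac{1}{r}=\frac{1}{q_3}-\frac{1}{q_0}=1+\frac{1}{q_3}-\big(\frac{1}{q_1}+\frac{1}{q_2}\big)\in\,]0,1[$. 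The Hölder inequality (\ref{Holder1}) then gives
$$\big\|e^{-(t-s)|\xi|^\alpha}|\xi|\,[|\widehat f|*|\widehat g|](s)\big\|_{L^{q_3,\infty}}\leq \big\|e^{-(t-s)|\xi|^\alpha}|\xi|\big\|_{L^{r,\infty}}\,\|\widehat f(s)\|_{L^{q_1,\infty}}\|\widehat g(s)\|_{L^{q_2,\infty}}.$$
To evaluate the first factor, rescale $\xi\mapsto (t-s)^{-1/\alpha}\xi$ and use the homogeneity (\ref{HomogeneiteLorentz}), exactly as before. This gives
$$\big\|e^{-(t-s)|\xi|^\alpha}|\xi|\big\|_{L^{r,\infty}}=C(t-s)^{-\frac{1}{\alpha}(\frac{d}{r}+1)},$$
where the constant is finite because $e^{-|\xi|^\alpha}|\xi|\in L^r$. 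Moving the $L^{q_3,\infty}$ quasi-norm inside the $ds$ integral, as in Proposition \ref{prop_estm_force_red}, we obtain
$$\Big\|\int_0^t\cdots ds\Big\|_{L^{q_3,\infty}}\leq C\int_0^t (t-s)^{-\gamma}F(s)G(s)\,ds,\qquad \gamma=\frac{1}{\alpha}\Big(\frac{d}{r}+1\Big),$$
with $F(s)=\|\widehat f(s)\|_{L^{q_1,\infty}}$ and $G(s)=\|\widehat g(s)\|_{L^{q_2,\infty}}$. Strictly speaking, $L^{q,\infty}$ is only normable for $q>1$, so this step uses an equivalent norm; that is harmless up to constants.

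\emph{Time estimate.} Extend $F$ and $G$ by zero for $s<0$. The right-hand side is then the convolution $[s^{-\gamma}\mathds{1}_{\{s>0\}}]*[FG](t)$. By the Hölder inequality (\ref{Holder1}) in time, $\|FG\|_{L^{p_0,\infty}_t}\leq \|F\|_{L^{p_1,\infty}_t}\|G\|_{L^{p_2,\infty}_t}$, where $\frac{1}{p_0}=\frac{1}{p_1}+\frac{1}{p_2}<1$, so $p_0>1$. Now define $\kappa$ by $\frac{1}{\kappa}=1+\frac{1}{p_3}-\frac{1}{p_0}$. The hypothesis $\frac{1}{p_3}<\frac{1}{p_0}<1$ gives $1<\kappa<+\infty$, and the assumed scaling relation says precisely that $\gamma=\frac{1}{\kappa}$. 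Hence $s^{-\gamma}\mathds{1}_{\{s>0\}}\in L^{\kappa,\infty}_t$, and the Young inequality (\ref{Young1}) on $\mathbb{R}$ gives the bound
$$C\|s^{-1/\kappa}\mathds{1}_{\{s>0\}}\|_{L^{\kappa,\infty}}\|FG\|_{L^{p_0,\infty}_t},$$
which is the claimed estimate.

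\emph{Main obstacle.} The delicate point is checking that every index lies in the open range $]1,+\infty[$ where (\ref{Young1}) holds; Remark \ref{Remarque1} shows this really matters. The stated strict inequalities are exactly what guarantee $q_0,r,p_0,\kappa\in\,]1,+\infty[$. The other points needing care are the finiteness of $\|e^{-|\xi|^\alpha}|\xi|\|_{L^r}$, which is automatic, and justifying the interchange of the $L^{q_3,\infty}$ quasi-norm with the time integral, which needs the normability remark above.
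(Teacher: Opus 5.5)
Your proof is correct and follows the paper's argument essentially step by step. You use H\"older and Young in $\xi$ to produce the factor $(t-s)^{-\frac{1}{\alpha}(\frac{d}{r}+1)}$, then handle the time convolution with Young and H\"older in Lorentz spaces using the identity $\frac{1}{\alpha}(\frac{d}{r}+1)=\frac{1}{\kappa}$. The only differences are the order of operations (Young before H\"older in $\xi$, H\"older before Young in $t$), plus your explicit check that all indices lie in $]1,+\infty[$ and your equivalent-norm remark for moving the $L^{q_3,\infty}$ quasi-norm inside the time integral, both of which the paper leaves implicit.
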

%%%%%%%%%%%%%%%%%%%%%%%%%%%%%%%%%%%%%%%%%%%%%%%%%%%
{\bf Proof.} We start by writing 
$$ \left|\int_0^te^{-(t-s)|\xi|^\alpha}|\xi| \times \left[|\widehat{f}|*|\widehat{g}|(s,\xi) \right] ds\right|\leq \displaystyle{\int_0^t} e^{-(t-s)|\xi|^\alpha}|\xi|\times \left[|\widehat{f}|*|\widehat{g}|(s,\xi)\right] ds,$$
and for a fixed time $t>0$, we take the $L^{q_3,\infty}$ norm in order to obtain
$$\left\|\int_0^te^{-(t-s)|\xi|^\alpha}|\xi| \times \left[|\widehat{f}|*|\widehat{g}|(s,\xi) \right] ds\right\|_{L^{q_3,\infty}} \leq \displaystyle{\int_0^t} \left\|e^{-(t-s)|\xi|^\alpha} |\xi| \times \left[|\widehat{f}|*|\widehat{g}|(s,\xi)\right] \right\|_{L^{q_3,\infty}}ds,$$
now, by the H\"older inequality (\ref{Holder1}) in Lorentz spaces with $\frac{1}{q_3}=\frac{1}{r}+\frac{1}{\rho}$  where $\frac{1}{\rho}=\frac{1}{q_1}+\frac{1}{q_2}-1$, we have
$$\left\|\int_0^te^{-(t-s)|\xi|^\alpha}|\xi| \times \left[|\widehat{f}|*|\widehat{g}|(s,\xi) \right] ds\right\|_{L^{q_3,\infty}} \leq \displaystyle{\int_0^t} \left\|e^{-(t-s)|\xi|^\alpha} |\xi| \right\|_{L^{r,\infty}} \left\|\left[|\widehat{f}|*|\widehat{g}|(s,\xi)\right] \right\|_{L^{\rho,\infty}}ds.$$
Note that by homogeneity (recall the property (\ref{HomogeneiteLorentz}) above for the Lorentz spaces) we easily get the identity $\left\|e^{-(t-s)|\xi|^\alpha} |\xi|\right\|_{L^{r,\infty}} = \left\|e^{-|\xi|^\alpha} |\xi|\right\|_{L^{r,\infty}}(t-s)^{-\frac{1}{\alpha} (\frac{d}{r}+1)}$, and since we have $\left\|e^{-|\xi|^\alpha} |\xi|\right\|_{L^{r,\infty}}\leq \left\|e^{-|\xi|^\alpha} |\xi|\right\|_{L^{r}}<+\infty$, we can write 
$$\left\|\int_0^te^{-(t-s)|\xi|^\alpha}|\xi| \times \left[|\widehat{f}|*|\widehat{g}|(s,\xi) \right] ds\right\|_{L^{q_3,\infty}} \leq C\int_0^t (t-s)^{-\frac{1}{\alpha} (\frac{d}{r}+1)}\left\|\left[|\widehat{f}|*|\widehat{g}|(s,\xi)\right] \right\|_{L^{\rho,\infty}}ds.$$
Applying again the Young inequality (\ref{Young1}) for Lorentz spaces with $1+\frac{1}{\rho}=\frac{1}{q_1}+\frac{1}{q_2}$, we obtain
$$\left\|\int_0^te^{-(t-s)|\xi|^\alpha}|\xi| \times \left[|\widehat{f}|*|\widehat{g}|(s,\xi) \right] ds\right\|_{L^{q_3,\infty}} \leq C\int_0^t (t-s)^{-\frac{1}{\alpha} (\frac{d}{r}+1)}\left\|\widehat{f}(s,\cdot)\right\|_{L^{q_1,\infty}}\left\|\widehat{g}(s,\cdot)\right\|_{L^{q_2,\infty}}ds.$$
%%%%%%%%%%%%%%%%%%%%%%%%%%%%%%%%%%%%%%%%%%%%%%%%%%%
We extend the functions $\widehat{f}(s,\cdot)$ and $\widehat{g}(s,\cdot)$ by $0$ when $s<0$ and we can thus consider the previous integral in the time variable as a convolution, so we can write 
\begin{eqnarray*}
\left\|\int_0^te^{-(t-s)|\xi|^\alpha}|\xi| \times \left[|\widehat{f}|*|\widehat{g}|(s,\xi) \right] ds\right\|_{L^{q_3,\infty}} \leq C\int_{-\infty}^{+\infty} (t-s)^{-\frac{1}{\alpha} (\frac{d}{r}+1)}\mathds{1}_{\{t-s>0\}}\qquad \\
\times\left\|\widehat{f}(s,\cdot)\right\|_{L^{q_1,\infty}_\xi}\left\|\widehat{g}(s,\cdot)\right\|_{L^{q_2,\infty}_\xi}ds\\
\leq  C\left(\mathds{1}_{\{s>0\}} s^{-\frac{1}{\alpha} (\frac{d}{r}+1)}\right)\ast \left(\left\|\widehat{f}(\cdot,\cdot)\right\|_{L^{q_1,\infty}}\left\|\widehat{g}(\cdot,\cdot)\right\|_{L^{q_2,\infty}}\right)(t).
\end{eqnarray*}
Now, taking the $L^{p_3,\infty}$ norm in the expression above we have 
\begin{eqnarray*}
\left\|\left\|\int_0^te^{-(t-s)|\xi|^\alpha}|\xi| \times \left[|\widehat{f}|*|\widehat{g}| \right] ds\right\|_{L^{q_3,\infty}}\right\|_{L^{p_3,\infty}_t}=\left\|\int_0^te^{-(t-s)|\xi|^\alpha}|\xi| \times \left[|\widehat{f}|*|\widehat{g}| \right] ds\right\|_{L^{p_3,\infty}_t(L^{q_3,\infty})}\\ 
\leq  C\left\|\left(\mathds{1}_{\{s>0\}} s^{-\frac{1}{\alpha} (\frac{d}{r}+1)}\right)\ast \left(\left\|\widehat{f}(\cdot,\cdot)\right\|_{L^{q_1,\infty}}\left\|\widehat{g}(\cdot,\cdot)\right\|_{L^{q_2,\infty}}\right)\right\|_{L^{p_3,\infty}_t}.
\end{eqnarray*}
At this point, we apply the Young inequality (\ref{Young1}) for Lorentz spaces (in the time variable) with $1+\frac{1}{p_3}=\frac{1}{\mathfrak{r}}+\frac{1}{\beta}$ where $\frac{1}{\beta}=\frac{1}{p_1}+\frac{1}{p_2}$, to obtain 
\begin{eqnarray*}
\left\|\int_0^te^{-(t-s)|\xi|^\alpha}|\xi| \times \left[|\widehat{f}|*|\widehat{g}| \right] ds\right\|_{L^{p_3,\infty}_t(L^{q_3,\infty})} &\leq & C\left\|\mathds{1}_{\{s>0\}} s^{-\frac{1}{\alpha} (\frac{d}{r}+1)}\right\|_{L^{\mathfrak{r},\infty}_t}\\
&&\times \left\| \left\|\widehat{f}(\cdot,\cdot)\right\|_{L^{q_1,\infty}}\left\|\widehat{g}(\cdot,\cdot)\right\|_{L^{q_2,\infty}}\right\|_{L^{\beta,\infty}_t}.
\end{eqnarray*}
Recall that we have the relationship $\frac{1}{\alpha} (\frac{d}{r}+1)=\frac{1}{\mathfrak{r}}$, so we have 
$\left\|\mathds{1}_{\{s>0\}} s^{-\frac{1}{\alpha} (\frac{d}{r}+1)}\right\|_{L^{\mathfrak{r},\infty}_t}<+\infty$, and we obtain the estimate 
$$\left\|\int_0^te^{-(t-s)|\xi|^\alpha}|\xi| \times \left[|\widehat{f}|*|\widehat{g}| \right] ds\right\|_{L^{p_3,\infty}_t(L^{q_3,\infty})} \leq  C \left\| \left\|\widehat{f}(\cdot,\cdot)\right\|_{L^{q_1,\infty}}\left\|\widehat{g}(\cdot,\cdot)\right\|_{L^{q_2,\infty}}\right\|_{L^{\beta,\infty}_t}.$$
Finally, we use Hölder's inequality (\ref{Holder1}) for Lorentz spaces with $\frac{1}{\beta}=\frac{1}{p_1}+\frac{1}{p_2}$ to get
$$\left\|\int_0^te^{-(t-s)|\xi|^\alpha}|\xi| \times \left[|\widehat{f}|*|\widehat{g}| \right] ds\right\|_{L^{p_3,\infty}_t(L^{q_3,\infty})} \leq  C \left\| \left\|\widehat{f}(\cdot,\cdot)\right\|_{L^{q_1,\infty}}\right\|_{L^{p_1,\infty}_t} \left\|\left\|\widehat{g}(\cdot,\cdot)\right\|_{L^{q_2,\infty}}\right\|_{L^{p_2,\infty}_t},$$
which we rewrite as follows: 
$$\left\|\int_0^te^{-(t-s)|\xi|^\alpha}|\xi| \times \left[|\widehat{f}|*|\widehat{g}| \right] ds\right\|_{L^{p_3,\infty}_t(L^{q_3,\infty})} \leq  C \left\| \widehat{f}\right\|_{L^{p_1,\infty}_t(L^{q_1,\infty})}  \left\| \widehat{g}\right\|_{L^{p_2,\infty}_t(L^{q_2,\infty})}, $$
which is the wished inequality and Proposition \ref{Prop_generalquadratic} is now proven. \hfill $\blacksquare$

With result at hand, the estimates for the bilinear terms are easy to obtain, indeed:
\begin{itemize}
\item[$\bullet$] if $p_1=p_2=p_3$ and if $q_1=q_2=q_3$, the relationship $\frac{d}{\alpha}\left(1+\frac{1}{q_3}-(\frac{1}{q_1}+\frac{1}{q_2})\right)+\frac{1}{\alpha}=1+\frac{1}{p_3}-(\frac{1}{p_1}+\frac{1}{p_2})$ becomes $\frac{d}{\alpha}\left(1-\frac{1}{q}\right)+\frac{1}{\alpha}=1-\frac{1}{p}$, which can be rewritten as $\frac{\alpha}{p}+\frac{d}{q'}=\alpha-1$, which is the condition stated as hypothesis in the Theorem \ref{thm_boussinesq_simple}. We write then 
$$\left\|\int_0^t\mathfrak{p}_{t-s}\ast \mathbb{P}(div(\vu\otimes\vu))ds\right\|_{L^{p,\infty}_t(L^{q,\infty}_\xi)} \leq C\left\|\int_0^te^{-(t-s)|\xi|^\alpha}|\xi| \times \left[|\widehat{\vu}|*|\widehat{\vu}| \right] ds\right\|_{L^{p,\infty}_t(L^{q,\infty})},$$
thus by applying the Proposition \ref{Prop_generalquadratic} to the right-hand side of the previous estimate, we obtain the inequality
$$\left\|\int_0^te^{-(t-s)|\xi|^\alpha}|\xi| \times \left[|\widehat{\vu}|*|\widehat{\vu}| \right] ds\right\|_{L^{p,\infty}_t(L^{q,\infty})}\leq C\| \widehat{\vu}\|_{L^{p,\infty}_t(L^{q,\infty})}  \| \widehat{\vu}\|_{L^{p,\infty}_t(L^{q,\infty})},$$
from which we deduce 
$$\left\|\int_0^t\mathfrak{p}_{t-s}\ast \mathbb{P}(div(\vu\otimes\vu))ds\right\|_{L^{p,\infty}_t(L^{q,\infty}_\xi)} \leq  C\| \vu\|_{L^{p,\infty}_t(L^{q,\infty}_\xi)}  \| \vu\|_{L^{p,\infty}_t(L^{q,\infty}_\xi)},$$
which gives us the boundedness of the first term of the bilinear term of the equation (\ref{Estimation_Bilineaire}).
\item[$\bullet$] For the second bilinear term, we have
$$\left\|\int_0^t \mathfrak{p}_{t-s}\ast \mathrm{div}(\theta \vec{u})ds\right\|_{L^{\mathfrak{p},\infty}_t(L^{\mathfrak{q},\infty}_\xi)}\leq \left\|\int_0^te^{-(t-s)|\xi|^\alpha}|\xi| \times \left[|\widehat{\vu}|*|\widehat{\theta}| \right] ds\right\|_{L^{\mathfrak{p},\infty}_t(L^{\mathfrak{q},\infty})}.$$
In order to apply the Proposition \ref{Prop_generalquadratic} we set $p_1=p$, $q_1=q$ and $p_2=p_3=\mathfrak{p}$, $q_2=q_3=\mathfrak{q}$, so the condition $\frac{d}{\alpha}\left(1+\frac{1}{q_3}-(\frac{1}{q_1}+\frac{1}{q_2})\right)+\frac{1}{\alpha}=1+\frac{1}{p_3}-(\frac{1}{p_1}+\frac{1}{p_2})$ becomes once more time $\frac{\alpha}{p}+\frac{d}{q'}=\alpha-1$, which is given as hypothesis and we can write
$$ \left\|\int_0^te^{-(t-s)|\xi|^\alpha}|\xi| \times \left[|\widehat{\vu}|*|\widehat{\theta}| \right] ds\right\|_{L^{\mathfrak{p},\infty}_t(L^{\mathfrak{q},\infty})}\leq C\|\widehat{\vu}\|_{L^{p,\infty}_t(L^{q,\infty})}  \| \widehat{\theta}\|_{L^{\mathfrak{p},\infty}_t(L^{\mathfrak{q},\infty})},$$
from which we easily deduce the estimate 
$$\left\|\int_0^t \mathfrak{p}_{t-s}\ast \mathrm{div}(\theta \vec{u})ds\right\|_{L^{\mathfrak{p},\infty}_t(L^{\mathfrak{q},\infty}_\xi)}\leq C\|\vu\|_{L^{p,\infty}_t(L^{q,\infty}_\xi)}  \|\theta\|_{L^{\mathfrak{p},\infty}_t(L^{\mathfrak{q},\infty}_\xi)},$$
and this ends the boundedness of the bilinear term. 
\end{itemize}
\end{itemize}
With all these estimates, if the quantity $\|\vu_0\|_{\dot{\mathscr{B}}^{-\frac{\alpha}{p}, \infty}_{L^{q,\infty}_\xi}}+\|\theta_0\|_{\dot{\mathscr{B}}^{-\frac{\alpha}{\mathfrak{p}}, \infty}_{L^{\mathfrak{q},\infty}_\xi}}+\|\vec{f}\|_{L^{{\bf p},\infty}_t(\mathcal{F}^{\mathfrak{s}}_{L^{{\bf q},\infty}_\xi})}$ is small enough, then we can apply the Lemma \ref{Lemme_Banach_Picard} to obtain a mild solution $(\vu, \theta)$ to the problem (\ref{eq_NSBalpha}) such that $(\vu, \theta)\in L^{p,\infty}_t(L^{q,\infty}_\xi)\times L^{\mathfrak{p},\infty}_t(L^{\mathfrak{q},\infty}_\xi)$ and this ends the proof of the Theorem \ref{thm_boussinesq_simple}. $\hfill \blacksquare$
%%%%%%%%%%%%%%%%%%%%%%%%%%%%%%%%%%%%%%%%%%%%%%%%%%%
\section{Proof of the Theorem \ref{thm_boussinesq_L1_alpha}}
The proof of this result follows the same main ideas displayed in the previous section and share some of the techniques presented above, however, some details must be made explicit in a careful manner. Indeed, by considering again the equation $\vvU=\vvU_0+\vvF+L(\vvU)+B(\vvU,\vvU)$ also given in (\ref{Equation_PointFixe}), with the terms $\vvU=\left[\begin{array}{c}\vu\\ \theta\end{array}\right]$, $\vvU_0 = \left[\begin{array}{c}
\mathfrak{p}_t\ast \vec{u_0} \\\mathfrak{p}_t\ast \theta_0\end{array}\right]$, $\vvF = \left[\begin{array}{c} \displaystyle{\int_0^t} \mathfrak{p}_{t-s}\ast\mathbb{P}(\vec{f})ds\\
0\end{array}\right]$,  $L(\vvU)=\left[\begin{array}{c} \displaystyle{\int_0^t} \mathfrak{p}_{t-s}\ast\mathbb{P}(\theta \vec{e}_d)ds\\  0\end{array}\right]$ and $B(\vvU,\vvU)=\left[\begin{array}{c} -\displaystyle{\int_0^t} \mathfrak{p}_{t-s}\ast\mathbb{P}\left(\mathrm{div}(\vec{u}\otimes \vec{u})\right)ds\\    -\displaystyle{\int_0^t} \mathfrak{p}_{t-s}\ast \mathrm{div}(\theta \vec{u})ds\end{array}\right]$, and if we fix the following resolution space $E=L^{\frac{\alpha}{\alpha-1},\infty}_t(L^1_\xi)\times L^{\mathfrak{p},\infty}_t(L^{\mathfrak{q},\infty}_\xi)$ endowed with the norm $\|\vvU\|_{E}=\left\|\left[\begin{array}{c}\vu\\ \theta\end{array}\right]\right\|_{E}=\|\vu\|_{L^{\frac{\alpha}{\alpha-1},\infty}_t(L^1_\xi)}+\mathfrak{C}\|\theta\|_{L^{\mathfrak{p},\infty}_t(L^{\mathfrak{q},\infty}_\xi)}$, then we only need to prove the following estimates:

\begin{itemize}
\item[(1)] For the initial data $\vvU_0$:
\begin{eqnarray}
\|\vvU_0\|_{E}&=&\|\mathfrak{p}_t\ast\vu_0\|_{L^{\frac{\alpha}{\alpha-1},\infty}_t(L^1_\xi)}+\mathfrak{C}\|\mathfrak{p}_t\ast\theta_0\|_{L^{\mathfrak{p},\infty}_t(L^{\mathfrak{q},\infty}_\xi)}\notag\\
&\leq & C\|\vu_0\|_{\dot{\mathscr{B}}^{-(\alpha-1), \infty}_{L^{1}_\xi}}+C\|\theta_0\|_{\dot{\mathscr{B}}^{-\frac{\alpha}{\mathfrak{p}},\infty}_{L^{\mathfrak{q},\infty}_\xi}}.\label{Estimation_DonneesInitialesAlpha}
\end{eqnarray}
\item[(2)] For the external force $\vvF$:
\begin{equation}
\|\vvF\|_{E} =\left\|\int_0^t\mathfrak{p}_{t-s}\ast\mathbb{P}(\vec{f})ds\right\|_{L^{\frac{\alpha}{\alpha-1},\infty}_t(L^1_\xi)}\leq C\|\vec{f}\|_{L^{{\bf p},\infty}_t(\mathcal{F}^{s}_{L^{{\bf q},\infty}_\xi})}.\label{Estimation_ForceAlpha}
\end{equation}
\item[(3)] For the Linear term $L(\vvU)$
\begin{equation}\label{Estimation_LineaireAlpha}
\|L(\vvU)\|_{E}=\left\|\int_0^t\mathfrak{p}_{t-s}\ast\mathbb{P}(\theta \vec{e}_d)ds\right\|_{L^{\frac{\alpha}{\alpha-1},\infty}_t(L^1_\xi)}\leq  C \|\theta\|_{L^{\mathfrak{p},\infty}_t(L^{\mathfrak{q},\infty}_\xi)}.
\end{equation}
\item[(4)] For the Bilinear term $B(\vvU,\vvU)$
\begin{eqnarray}
\|B(\vvU,\vvU)\|_{E}=\left\|\int_0^t\mathfrak{p}_{t-s}\ast\mathbb{P}\left(\mathrm{div}(\vec{u}\otimes \vec{u})\right)ds\right\|_{L^{\frac{\alpha}{\alpha-1},\infty}_t(L^1_\xi)}+\mathfrak{C}\left\|\int_0^t \mathfrak{p}_{t-s}\ast \mathrm{div}(\theta \vec{u})ds\right\|_{L^{\mathfrak{p},\infty}_t(L^{\mathfrak{q},\infty}_\xi)}\notag\\
\leq  C\|\vu\|_{L^{\frac{\alpha}{\alpha-1},\infty}_t(L^1_\xi)}\|\vu\|_{L^{\frac{\alpha}{\alpha-1},\infty}_t(L^1_\xi)}+C\|\vu\|_{L^{\frac{\alpha}{\alpha-1},\infty}_t(L^1_\xi)}\|\theta\|_{L^{\mathfrak{p},\infty}_t(L^{\mathfrak{q},\infty}_\xi)}.\qquad\qquad\label{Estimation_BilineaireAlpha}
\end{eqnarray}
\end{itemize}
As before, the estimates (\ref{Estimation_DonneesInitialesAlpha}), (\ref{Estimation_ForceAlpha}), (\ref{Estimation_LineaireAlpha}) and (\ref{Estimation_BilineaireAlpha}) will be treated separately.\\

\begin{itemize}
\item[(1)] For the initial data $\vu_0$ and $\theta_0$ we will show that we have the controls 
$$\|\mathfrak{p}_t\ast\vu_0\|_{L^{\frac{\alpha}{\alpha-1},\infty}_t(L^1_\xi)} \leq C \|\vu_0\|_{\dot{\mathscr{B}}^{-(\alpha-1), \infty}_{L^{1}_\xi}}\quad \mbox{and}\quad \|\mathfrak{p}_t\ast\theta_0\|_{ L^{\mathfrak{p},\infty}_t(L^{\mathfrak{q},\infty}_\xi)}\leq C\|\theta_0\|_{\dot{\mathscr{B}}^{-\frac{\alpha}{\mathfrak{p}},\infty}_{L^{\mathfrak{q},\infty}_\xi}}.$$
The second estimate was already studied in the previous theorem so we only focus here in the first one and we start by writing
$$\|\mathfrak{p}_t\ast\vu_0\|_{L^1_\xi}=\| e^{-t|\xi|^\alpha}\widehat{\vu}_0(\cdot)\|_{L^1}=\sum_{j\in \mathbb{Z}}\int_{\{2^j\leq |\xi|\leq 2^{j+1}\}}e^{-t|\xi|^\alpha}|\widehat{\vu}_0(\xi)|d\xi.$$
Since over the sets $\{2^j\leq |\xi|\leq 2^{j+1}\}$ we have $e^{-t|\xi|^\alpha}\leq e^{-t2^{j\alpha}}$, we have
$$\|\mathfrak{p}_t\ast\vu_0\|_{L^1_\xi}\leq \sum_{j\in \mathbb{Z}}e^{-t2^{j\alpha}}\|\mathds{1}_{\{2^j\leq |\xi|\leq 2^{j+1}\}}\widehat{\vu}_0(\cdot)\|_{L^1}= \sum_{j\in \mathbb{Z}}e^{-t2^{j\alpha}}2^{(\alpha-1)j}\times2^{-(\alpha-1)j}\|\mathds{1}_{\{2^j\leq |\xi|\leq 2^{j+1}\}}\widehat{\vu}_0(\cdot)\|_{L^1},$$
and using the definition of the norm $\|\vu_0\|_{\dot{\mathscr{B}}^{-(\alpha-1), \infty}_{L^{1}_\xi}}=\underset{j\in \mathbb{Z}}{\sup}\, 2^{-(\alpha-1)j}\|\mathds{1}_{\{2^j\leq |\xi|\leq 2^{j+1}\}}\widehat{\vu}_0(\cdot)\|_{L^1}$ (recall the expression (\ref{NormFBL})), we obtain
$$\|\mathfrak{p}_t\ast\vu_0\|_{L^1_\xi}\leq  \|\vu_0\|_{\dot{\mathscr{B}}^{-(\alpha-1), \infty}_{L^{1}_\xi}} \sum_{j\in \mathbb{Z}}e^{-t2^{j\alpha}}2^{(\alpha-1)j}.$$
Note now that the previous sum can be rewritten as follows
$$\|\mathfrak{p}_t\ast\vu_0\|_{L^1_\xi}\leq  t^{-\frac{\alpha-1}{\alpha}}\|\vu_0\|_{\dot{\mathscr{B}}^{-(\alpha-1), \infty}_{L^{1}_\xi}} \sum_{j\in \mathbb{Z}}e^{-t2^{j\alpha}}(t2^{j\alpha})^{\frac{(\alpha-1)}{\alpha}},$$
and we can easily seen that this sum is convergent (regardless of the time variable) so we have the estimate
$$  t^{\frac{\alpha-1}{\alpha}}\|\mathfrak{p}_t\ast\vu_0\|_{L^1_\xi}\leq C\|\vu_0\|_{\dot{\mathscr{B}}^{-(\alpha-1), \infty}_{L^{1}_\xi}}.$$
Noting that the function $h:t\mapsto \|\mathfrak{p}_t\ast\vu_0\|_{L^1_\xi}=\|e^{-t|\xi|^\alpha}|\widehat{\vu}_0|\|_{L^{1}}$ is non increasing, it is equal to its own non increasing rearrangement, and thus taking the supremum with respect to $t>0$ we have (recall the definition of the Lorentz norm given in the expression (\ref{Lorentzinfty}) above):
$$\|\mathfrak{p}_t\ast\vu_0\|_{L^{\frac{\alpha}{\alpha-1},\infty}_t(L^{1}_\xi)}=\underset{t>0}{\sup}\; t^{\frac{\alpha-1}{\alpha}}\|\mathfrak{p}_t\ast\vu_0\|_{L^1_\xi} \leq C\|\vu_0\|_{\dot{\mathscr{B}}^{-(\alpha-1), \infty}_{L^{1}_\xi}},$$
which is the announced control for $\vu_0$.
%%%%%%%%%%%%%%%%%%%%%%%%%%%%%%%%%%%%%%%%%%%%%%%%%%%
\item[(2)] The estimate needed for the external force $\vf$ will be a consequence of a variation of the  Proposition \ref{prop_estm_force_red}. We give the details here for the sake of completeness: we thus write
$$\left\|\int_0^t\mathfrak{p}_{t-s}\ast\mathbb{P}(\vec{f})ds\right\|_{L^1_\xi}\leq C\left\|\int_0^te^{-(t-s)|\xi|^\alpha}|\widehat{\vf}|(s, \cdot)ds \right\|_{L^{1}} \leq C\int_0^t\left\|e^{-(t-s)|\xi|^\alpha}|\xi|^{-{\mathfrak{s}}}|\xi|^{\mathfrak{s}} |\widehat{\vf}|(s, \cdot)\right\|_{L^{1}}ds,$$
with $0\leq {\mathfrak{s}}<\frac{d}{{\bf q}'}$. By the H\"older inequality (\ref{Holder2}) in Lorentz spaces with $1=\frac{1}{\bf q}+\frac{1}{{\bf q}'}$ we have 
$$\left\|\int_0^t\mathfrak{p}_{t-s}\ast\mathbb{P}(\vec{f})ds\right\|_{L^1_\xi} \leq C\int_0^t\left\|e^{-(t-s)|\xi|^\alpha}|\xi|^{-{\mathfrak{s}}}\right\|_{L^{{\bf q}',1}}\left\||\xi|^{\mathfrak{s}} |\widehat{\vf}|(s, \cdot)\right\|_{L^{{\bf q},\infty}}ds,$$
note that we have $\left\|e^{-(t-s)|\xi|^\alpha}|\xi|^{-\mathfrak{s}}\right\|_{L^{{\bf q}',1}}\leq C(t-s)^{-\frac{1}{\alpha}(\frac{d}{{\bf q}'}-\mathfrak{s})}$. Extending by $0$ the function $\widehat{\vf}(s, \cdot)$ when $s<0$, we can write
\begin{eqnarray*}
\int_0^t(t-s)^{-\frac{1}{\alpha}(\frac{d}{{\bf q}'}-\mathfrak{s})}\left\||\xi|^\mathfrak{s} |\widehat{\vf}|(s, \cdot)\right\|_{L^{{\bf q},\infty}}ds&=&\int_{-\infty}^{+\infty}(t-s)^{-\frac{1}{\alpha}(\frac{d}{{\bf q}'}-\mathfrak{s})}\mathds{1}_{\{t-s>0\}}\left\||\xi|^\mathfrak{s} |\widehat{\vf}|(s, \cdot)\right\|_{L^{{\bf q},\infty}}ds\\
&=&\left[ s^{-\frac{1}{\alpha}(\frac{d}{{\bf q}'}-\mathfrak{s})}\mathds{1}_{\{s>0\}}\right]* \left[ \||\xi|^{\mathfrak{s}}|\widehat{\vf}|(\cdot, \cdot)\|_{L^{{\bf q},\infty}_\xi}\right] (t),
\end{eqnarray*}
from which we obtain the control 
$$\left\|\int_0^t\mathfrak{p}_{t-s}\ast\mathbb{P}(\vec{f})ds\right\|_{L^1_\xi} \leq C\left[ s^{-\frac{1}{\alpha}(\frac{d}{{\bf q}'}-\mathfrak{s})}\mathds{1}_{\{s>0\}}\right]* \left[ \||\xi|^{\mathfrak{s}}|\widehat{\vf}|(\cdot, \cdot)\|_{L^{{\bf q},\infty}}\right] (t).$$
We apply now the $L^{\frac{\alpha}{\alpha-1},\infty}_t$ norm in the variable $t$ to the previous expression: 
$$\left\|\left\| \int_0^t\mathfrak{p}_{t-s}\ast\mathbb{P}(\vec{f})ds\right\|_{L^{1}_\xi}\right\|_{L^{\frac{\alpha}{\alpha-1},\infty}_t} \leq C\left\|\left[ s^{-\frac{1}{\alpha}(\frac{d}{{\bf q}'}-\mathfrak{s})}\mathds{1}_{\{s>0\}}\right]* \left[ \||\xi|^{\mathfrak{s}}|\widehat{\vf}|(\cdot, \cdot)\|_{L^{{\bf q},\infty}}\right] (t)\right\|_{L^{\frac{\alpha}{\alpha-1},\infty}_t}.$$
If we apply the Young inequality (\ref{Young1}) for Lorentz spaces on $\mathbb{R}$, with parameters $1+\frac{\alpha-1}{\alpha}=(\frac{d}{\alpha{\bf q}'}-\frac{\mathfrak{s}}{\alpha})+\frac{1}{\bf p}$ (recall that by hypothesis we have $\frac{\alpha}{{\bf p}}+\frac{d}{{\bf q}'}-\mathfrak{s}=2\alpha-1$ from which we easily deduce the identity $\frac{1}{{\bf p}}+(\frac{d}{\alpha{\bf q}'}-\frac{\mathfrak{s}}{\alpha})=1+\frac{\alpha-1}{\alpha}$) we thus have 
$$\left\|\left\| \int_0^t\mathfrak{p}_{t-s}\ast\mathbb{P}(\vec{f})ds\right\|_{L^{1}_\xi}\right\|_{L^{\frac{\alpha}{\alpha-1},\infty}_t} \leq C\left\| s^{-\frac{1}{\alpha}(\frac{d}{{\bf q}'}-\mathfrak{s})}\mathds{1}_{\{s>0\}}\right\|_{L^{\frac{\alpha {\bf q}'}{d-\mathfrak{s}{\bf q}'}, \infty}_t}\left\| \||\xi|^{\mathfrak{s}}|\widehat{\vf}|(\cdot, \cdot)\|_{L^{{\bf q},\infty}} \right\|_{L^{{\bf p},\infty}_t},$$
which, we can rewrite as follows (recall the definition of the norm $L^{{\bf p},\infty}_t(\mathcal{F}^{\mathfrak{s}}_{L^{{\bf q},\infty}_\xi})$ given in the expression (\ref{Def_NormFHL}) above): 
\begin{eqnarray*}
\left\| \int_0^t\mathfrak{p}_{t-s}\ast\mathbb{P}(\vec{f})ds\right\|_{L^{\frac{\alpha}{\alpha-1},\infty}_t(L^{1}_\xi)} &\leq& C\left\| s^{-(\frac{d-\mathfrak{s}{\bf q}'}{\alpha {\bf q}'})}\mathds{1}_{\{s>0\}}\right\|_{L^{\frac{\alpha {\bf q}'}{d-\mathfrak{s}{\bf q}'}, \infty}_t}\|\vf\|_{L^{{\bf p},\infty}_t(\mathcal{F}^{\mathfrak{s}}_{L^{{\bf q},\infty}_\xi})}\\
&\leq& C\|\vf\|_{L^{{\bf p},\infty}_t(\mathcal{F}^{\mathfrak{s}}_{L^{{\bf q},\infty}_\xi})},
\end{eqnarray*}
since the quantity $\| s^{-(\frac{d-\mathfrak{s}{\bf q}'}{\alpha {\bf q}'})}\mathds{1}_{\{s>0\}}\|_{L^{\frac{\alpha {\bf q}'}{d-\mathfrak{s}{\bf q}'}, \infty}_t}$ is bounded. 
%%%%%%%%%%%%%%%%%%%%%%%%%%%%%%%%%%%%%%%%%%%%%%%%%%%
\item[(3)] For the linear term we recall that we have by hypothesis the relationship $\frac{\alpha}{\mathfrak{p}}+\frac{d}{\mathfrak{q}'}=2\alpha-1$ and the same computations performed above with $\mathfrak{s}=0$ apply, we thus obtain the control: 
$$\left\|\int_0^t\mathfrak{p}_{t-s}\ast\mathbb{P}(\theta \vec{e}_d)ds\right\|_{L^{\frac{\alpha}{\alpha-1},\infty}_t(L^{1}_\xi)} \leq  C \|\theta\|_{L^{\mathfrak{p},\infty}_t(L^{\mathfrak{q},\infty}_\xi)}.$$
%%%%%%%%%%%%%%%%%%%%%%%%%%%%%%%%%%%%%%%%%%%%%%%%%%%
\item[(4)] For the bilinear term we need to study the following estimates
\begin{equation}\label{EstimationL1L1}
\left\|\int_0^t\mathfrak{p}_{t-s}\ast\mathbb{P}\left(\mathrm{div}(\vec{u}\otimes \vec{u})\right)ds\right\|_{L^{\frac{\alpha}{\alpha-1},\infty}_t(L^{1}_\xi)}\leq C\|\vu\|_{L^{\frac{\alpha}{\alpha-1},\infty}_t(L^{1}_\xi)}\|\vu\|_{L^{\frac{\alpha}{\alpha-1},\infty}_t(L^{1}_\xi)},
\end{equation}
and
\begin{equation}\label{EstimationL1Lq}
\left\|\int_0^t \mathfrak{p}_{t-s}\ast \mathrm{div}(\theta \vec{u})ds\right\|_{L^{\mathfrak{p},\infty}_t(L^{\mathfrak{q},\infty}_\xi)}\leq C\|\vu\|_{L^{\frac{\alpha}{\alpha-1},\infty}_t(L^{1}_\xi)}\|\theta\|_{L^{\mathfrak{p},\infty}_t(L^{\mathfrak{q},\infty}_\xi)}.
\end{equation}
These two terms can be studied by considering a modification of the Proposition \ref{Prop_generalquadratic} and we give the details below for the sake of completeness.
\begin{itemize}
\item[$\bullet$] For the estimate (\ref{EstimationL1L1}) we write
$$\left|\mathcal{F}\left(\int_0^t\mathfrak{p}_{t-s}\ast\mathbb{P}\left(\mathrm{div}(\vec{u}\otimes \vec{u})\right)ds\right)(\xi)\right|\leq C\displaystyle{\int_0^t} e^{-(t-s)|\xi|^\alpha}|\xi|\times \left[|\widehat{\vu}|*|\widehat{\vu}|(s,\xi)\right] ds,$$
taking $L^{1}$ norm in the expression above we have
$$\left\|\int_0^t\mathfrak{p}_{t-s}\ast\mathbb{P}\left(\mathrm{div}(\vec{u}\otimes \vec{u})\right)ds\right\|_{L^{1}_\xi} \leq C\displaystyle{\int_0^t} \left\|e^{-(t-s)|\xi|^\alpha} |\xi| \times \left[|\widehat{\vu}|*|\widehat{\vu}|(s,\xi)\right] \right\|_{L^{1}}ds,$$
now, by the H\"older inequality in the Lebesgue framework, we obtain
$$\left\|\int_0^t\mathfrak{p}_{t-s}\ast\mathbb{P}\left(\mathrm{div}(\vec{u}\otimes \vec{u})\right)ds\right\|_{L^{1}_\xi} \leq C\displaystyle{\int_0^t} \left\|e^{-(t-s)|\xi|^\alpha} |\xi| \right\|_{L^{\infty}} \left\|\left[|\widehat{\vu}|*|\widehat{\vu}|(s,\xi)\right] \right\|_{L^{1}}ds.$$
By homogeneity of the Lorentz spaces (recall the property (\ref{HomogeneiteLorentz}) above) we have the identity $\left\|e^{-(t-s)|\xi|^\alpha} |\xi|\right\|_{L^{\infty}} = \left\|e^{-|\xi|^\alpha} |\xi|\right\|_{L^{\infty}}(t-s)^{-\frac{1}{\alpha}}$, and since we have $\left\|e^{-|\xi|^\alpha} |\xi|\right\|_{L^{\infty}}<+\infty$, we can write 
$$\left\|\int_0^t\mathfrak{p}_{t-s}\ast\mathbb{P}\left(\mathrm{div}(\vec{u}\otimes \vec{u})\right)ds\right\|_{L^{1}_\xi}\leq C\int_0^t (t-s)^{-\frac{1}{\alpha}}\left\|\left[|\widehat{\vu}|*|\widehat{\vu}|(s,\xi)\right] \right\|_{L^{1}}ds.$$
Applying again the Young inequality for usual Lebesgue spaces, we have
$$\left\|\int_0^t\mathfrak{p}_{t-s}\ast\mathbb{P}\left(\mathrm{div}(\vec{u}\otimes \vec{u})\right)ds\right\|_{L^{1}_\xi} \leq C\int_0^t (t-s)^{-\frac{1}{\alpha}}\left\|\widehat{\vu}(s,\cdot)\right\|_{L^{1}}\left\|\widehat{\vu}(s,\cdot)\right\|_{L^{1}}ds.$$
We extend the function $\widehat{\vu}(s,\cdot)$ by $0$ when $s<0$ and we obtain
\begin{eqnarray*}
\left\|\int_0^t\mathfrak{p}_{t-s}\ast\mathbb{P}\left(\mathrm{div}(\vec{u}\otimes \vec{u})\right)ds\right\|_{L^{1}_\xi}\leq C\int_{-\infty}^{+\infty} \mathds{1}_{\{t-s>0\}}(t-s)^{-\frac{1}{\alpha}}\left\|\widehat{\vu}(s,\cdot)\right\|_{L^{1}}\left\|\widehat{\vu}(s,\cdot)\right\|_{L^{1}}ds\\
\leq  C\left(\mathds{1}_{\{s>0\}} s^{-\frac{1}{\alpha}}\right)\ast \left(\left\|\vu(\cdot,\cdot)\right\|_{L^{1}_\xi}\|\vu(\cdot,\cdot)\|_{L^{1}_\xi}\right)(t).
\end{eqnarray*}
Now, taking the $L^{\frac{\alpha}{\alpha-1},\infty}$ norm in the expression above we have 
\begin{eqnarray*}
\left\|\left\|\int_0^t\mathfrak{p}_{t-s}\ast\mathbb{P}\left(\mathrm{div}(\vec{u}\otimes \vec{u})\right)ds\right\|_{L^{1}_\xi}\right\|_{L^{\frac{\alpha}{\alpha-1},\infty}_t} \leq  C\left\|\left(\mathds{1}_{\{s>0\}} s^{-\frac{1}{\alpha}}\right)\ast \left(\|\vu\|_{L^{1}_\xi}\|\vu\|_{L^{1}_\xi} \right)\right\|_{L^{\frac{\alpha}{\alpha-1},\infty}_t}.
\end{eqnarray*}
At this point, we apply the Young inequality (\ref{Young1}) for Lorentz spaces (in the time variable) with $1+\frac{\alpha-1}{\alpha}=\frac{1}{\alpha}+\frac{2(\alpha-1)}{\alpha}$ to obtain
\begin{eqnarray*}
\left\|\int_0^t\mathfrak{p}_{t-s}\ast\mathbb{P}\left(\mathrm{div}(\vec{u}\otimes \vec{u})\right)ds\right\|_{L^{\frac{\alpha}{\alpha-1},\infty}_t(L^{1}_\xi)} \leq  C\left\|\mathds{1}_{\{s>0\}} s^{-\frac{1}{\alpha} }\right\|_{L^{\alpha,\infty}_t}\times \left\| \|\vu\|_{L^{1}_\xi}\|\vu\|_{L^{1}_\xi}\right\|_{L^{\frac{\alpha}{2(\alpha-1)},\infty}_t}.
\end{eqnarray*}
We remark now that $\left\|\mathds{1}_{\{s>0\}} s^{-\frac{1}{\alpha}}\right\|_{L^{\alpha,\infty}_t}<+\infty$, and we obtain the estimate 
$$\left\|\int_0^t\mathfrak{p}_{t-s}\ast\mathbb{P}\left(\mathrm{div}(\vec{u}\otimes \vec{u})\right)ds\right\|_{L^{\frac{\alpha}{\alpha-1},\infty}_t(L^{1}_\xi)}\leq  C\left\| \|\vu\|_{L^{1}_\xi}\|\vu\|_{L^{1}_\xi}\right\|_{L^{\frac{\alpha}{2(\alpha-1)},\infty}_t},$$
and by the Hölder inequality (\ref{Holder1}) for Lorentz spaces with $\frac{2(\alpha-1)}{\alpha}=\frac{\alpha-1}{\alpha}+\frac{\alpha-1}{\alpha}$, we obtain
$$\left\|\int_0^t\mathfrak{p}_{t-s}\ast\mathbb{P}\left(\mathrm{div}(\vec{u}\otimes \vec{u})\right)ds\right\|_{L^{\frac{\alpha}{\alpha-1},\infty}_t(L^{1}_\xi)}\leq  C \left\| \left\|\vu\right\|_{L^{1}_\xi}\right\|_{L^{\frac{\alpha}{\alpha-1},\infty}_t}\left\| \left\|\vu\right\|_{L^{1}_\xi}\right\|_{L^{\frac{\alpha}{\alpha-1},\infty}_t},$$
which is the wished estimate
$$\left\|\int_0^t\mathfrak{p}_{t-s}\ast\mathbb{P}\left(\mathrm{div}(\vec{u}\otimes \vec{u})\right)ds\right\|_{L^{\frac{\alpha}{\alpha-1},\infty}_t(L^{1}_\xi)}\leq  C \|\vu\|_{L^{\frac{\alpha}{\alpha-1},\infty}_t(L^{1}_\xi)} \|\vu\|_{L^{\frac{\alpha}{\alpha-1},\infty}_t(L^{1}_\xi)} .$$
%%%%%%%%%%%%%%%%%%%%%%%%%%%%%%%%%%%%%%%%%%%%%%%%%%%
\item[$\bullet$] For the control (\ref{EstimationL1Lq}) we proceed as follows: by the properties of the Fourier transform we have
$$\left|\mathcal{F}\left(\int_0^t\mathfrak{p}_{t-s}\ast\mathrm{div}(\vec{u}\theta)ds\right)(\xi)\right|\leq C\int_0^te^{-(t-s)|\xi|^\alpha}|\xi| \times \left[|\widehat{\vu}|*|\widehat{\theta}| \right] ds.$$
We take the $L^{\mathfrak{q},\infty}$ norm in order to obtain 
$$\left\|\int_0^t\mathfrak{p}_{t-s}\ast\mathrm{div}(\vec{u}\theta)ds\right\|_{L^{\mathfrak{q},\infty}_\xi} \leq \displaystyle{\int_0^t} \left\|e^{-(t-s)|\xi|^\alpha} (t-s)^\frac{1}{\alpha}|\xi| \times (t-s)^{-\frac{1}{\alpha}}\left[|\widehat{\vu}|*|\widehat{\theta}| (s,\cdot)\right] \right\|_{L^{\mathfrak{q},\infty}}ds,$$
from which we deduce
$$\left\|\int_0^t\mathfrak{p}_{t-s}\ast\mathrm{div}(\vec{u}\theta)ds\right\|_{L^{\mathfrak{q},\infty}_\xi}\leq C\displaystyle{\int_0^t} (t-s)^{-\frac{1}{\alpha}} \left\|\left[|\widehat{\vu}|*|\widehat{\theta}| (s,\cdot)\right] \right\|_{L^{\mathfrak{q},\infty}}ds,$$
as we have $\left\|e^{-(t-s)|\cdot|^\alpha}(t-s)^\frac{1}{\alpha}|\cdot|\right\|_{L^{\infty}}=\left\|e^{-|\cdot|^\alpha}|\cdot|\right\|_{L^{\infty}}<+\infty$. Applying the Young inequality for Lorentz spaces (see (\ref{Young2})), we obtain
$$\left\|\int_0^t\mathfrak{p}_{t-s}\ast\mathrm{div}(\vec{u}\theta)ds\right\|_{L^{\mathfrak{q},\infty}_\xi}\leq C\int_0^t (t-s)^{-\frac{1}{\alpha}}\left\|\widehat{\vu}(s,\cdot)\right\|_{L^{1}}\left\|\widehat{\theta}(s,\cdot)\right\|_{L^{\mathfrak{q},\infty}}ds.$$
We extend the functions $\widehat{\vu}(s,\cdot)$ and $\widehat{\theta}(s,\cdot)$ by $0$ when $s<0$, we can write 
\begin{eqnarray*}
\left\|\int_0^t\mathfrak{p}_{t-s}\ast\mathrm{div}(\vec{u}\theta)ds\right\|_{L^{\mathfrak{q},\infty}_\xi}\leq C\int_{-\infty}^{+\infty}  \mathds{1}_{\{t-s>0\}}(t-s)^{-\frac{1}{\alpha}}\left\|\widehat{\vu}(s,\cdot)\right\|_{L^{1}}\left\|\widehat{\theta}(s,\cdot)\right\|_{L^{\mathfrak{q},\infty}}ds\\
\leq  C\left(\mathds{1}_{\{s>0\}} s^{-\frac{1}{\alpha}}\right)\ast \left(\left\|\widehat{\vu}(\cdot,\cdot)\right\|_{L^{1}}\left\|\widehat{\theta}(\cdot,\cdot)\right\|_{L^{\mathfrak{q},\infty}}\right)(t).
\end{eqnarray*}
Now, taking the $L^{\mathfrak{p},\infty}$ norm in the expression above we have 
\begin{eqnarray*}
\left\|\left\|\int_0^t\mathfrak{p}_{t-s}\ast\mathrm{div}(\vec{u}\theta)ds\right\|_{L^{\mathfrak{q},\infty}_\xi}\right\|_{L^{\mathfrak{p},\infty}_t} \leq  C\left\|\left(\mathds{1}_{\{s>0\}} s^{-\frac{1}{\alpha}}\right)\ast \left(\left\|\widehat{\vu}\right\|_{L^{1}}\left\|\widehat{\theta}\right\|_{L^{\mathfrak{q},\infty}}\right)\right\|_{L^{\mathfrak{p},\infty}_t}.
\end{eqnarray*}
At this point, we apply the Young inequality (\ref{Young1}) for Lorentz spaces (in the time variable) with $1+\frac{1}{\mathfrak{p}}=\frac{1}{\mathfrak{r}}+\frac{1}{\rho}$ and $1<\mathfrak{r}, \rho<+\infty$: 
\begin{eqnarray*}
\left\|\int_0^t\mathfrak{p}_{t-s}\ast\mathrm{div}(\vec{u}\theta)ds\right\|_{L^{\mathfrak{p},\infty}_t(L^{\mathfrak{q}, \infty}_\xi)} &\leq & C\left\|\mathds{1}_{\{s>0\}} s^{-\frac{1}{\alpha}}\right\|_{L^{\mathfrak{r},\infty}_t}\\
&&\times \left\| \left\|\widehat{\vu}(\cdot,\cdot)\right\|_{L^{1}}\left\|\widehat{\theta}(\cdot,\cdot)\right\|_{L^{\mathfrak{q},\infty}}\right\|_{L^{\rho,\infty}_t}.
\end{eqnarray*}
Note that if we set $\frac{1}{\rho}=\frac{\alpha-1}{\alpha}+\frac{1}{\mathfrak{p}}$, from the relationship  $1+\frac{1}{\mathfrak{p}}=\frac{1}{\mathfrak{r}}+\frac{1}{\rho}$, we have $\frac{1}{\alpha}=\frac{1}{\mathfrak{r}}$ (recall that $1<\alpha<2$) and we deduce that
$\left\|\mathds{1}_{\{s>0\}} s^{-\frac{1}{\alpha}}\right\|_{L^{\mathfrak{r},\infty}_t}=\left\|\mathds{1}_{\{s>0\}} s^{-\frac{1}{\alpha}}\right\|_{L^{\alpha,\infty}_t}<+\infty$, and we obtain the estimate 
$$\left\|\int_0^t\mathfrak{p}_{t-s}\ast\mathrm{div}(\vec{u}\theta)ds\right\|_{L^{\mathfrak{p},\infty}_t(L^{\mathfrak{q}, \infty}_\xi)} \leq  C \left\| \left\|\widehat{\vu}(\cdot,\cdot)\right\|_{L^{1}}\left\|\widehat{\theta}(\cdot,\cdot)\right\|_{L^{\mathfrak{q},\infty}}\right\|_{L^{\rho,\infty}_t}.$$
Finally, we use the Hölder inequality (\ref{Holder1}) for Lorentz spaces with $\frac{1}{\rho}=\frac{\alpha-1}{\alpha}+\frac{1}{\mathfrak{p}}$ to get
$$\left\|\int_0^t\mathfrak{p}_{t-s}\ast\mathrm{div}(\vec{u}\theta)ds\right\|_{L^{\mathfrak{p},\infty}_t(L^{\mathfrak{q}, \infty}_\xi)}  \leq  C \left\| \left\|\widehat{\vu}(\cdot,\cdot)\right\|_{L^{1}}\right\|_{L^{\frac{\alpha}{\alpha-1},\infty}_t} \left\|\left\|\widehat{\theta}(\cdot,\cdot)\right\|_{L^{\mathfrak{q},\infty}}\right\|_{L^{\mathfrak{p},\infty}_t},$$
which we rewrite as follows: 
$$\left\|\int_0^t\mathfrak{p}_{t-s}\ast\mathrm{div}(\vec{u}\theta)ds\right\|_{L^{\mathfrak{p},\infty}_t(L^{\mathfrak{q}, \infty}_\xi)}\leq  C \left\|\vu\right\|_{L^{\frac{\alpha}{\alpha-1},\infty}_t(L^{1}_\xi)}  \left\| \theta\right\|_{L^{\mathfrak{p},\infty}_t(L^{\mathfrak{q},\infty}_\xi)}.$$
%%%%%%%%%%%%%%%%%%%%%%%%%%%%%%%%%%%%%%%%%%%%%%%%%%%
\end{itemize}
\end{itemize}
With all the estimates (\ref{Estimation_DonneesInitialesAlpha}), (\ref{Estimation_ForceAlpha}), (\ref{Estimation_LineaireAlpha}) and (\ref{Estimation_BilineaireAlpha}) at our disposal, if the quantity $\|\vu_0\|_{\dot{\mathscr{B}}^{-(\alpha-1), \infty}_{L^{1}_\xi}}+\|\theta_0\|_{ \dot{\mathscr{B}}^{-\frac{\alpha}{\mathfrak{p}}, \infty}_{L^{\mathfrak{q},\infty}_\xi}}+\|\vec{f}\|_{L^{{\bf p},\infty}_t(\mathcal{F}^{s}_{L^{{\bf q},\infty}_\xi})}$ is small enough, then we can again apply the Lemma \ref{Lemme_Banach_Picard} to obtain a global mild solution $(\vec{u},\theta)$ to the fractional dissipative Navier-Stokes-Boussinesq system (\ref{eq_NSBalpha}) such that we have $(\vec{u},\theta)\in L^{\frac{\alpha}{\alpha-1},\infty}_t(L^1_\xi)\times L^{\mathfrak{p},\infty}_t(L^{\mathfrak{q},\infty}_\xi)$. The Theorem \ref{thm_boussinesq_L1_alpha} is now completely proven. $\hfill \blacksquare$
%%%%%%%%%%%%%%%%%%%%%%%%%%%%%%%%%%%%%%%%%%%%%%%%%%%
\appendix
%%%%%%%%%%%%%%%%%%%%%%%%%%%%%%%%%%%%%%%%%%%%%%%%%%%
\section{Proof of the Proposition \ref{prop_max_fourier_besov}}\label{Appendix1}
We aim to prove the following control $\|\vec{\psi}\|_{\dot{\mathscr{B}}^{-\beta, \infty}_{L^{1}_\xi}}\leq C \|\vec{\psi}\|_{E}$ for all function $\vec{\psi}\in E$. Recall that by the formula (\ref{NormFBL}) we have $\|\vec{\psi}\|_{\dot{\mathscr{B}}^{-\beta, \infty}_{L^{1}_\xi}}=\underset{j\in \mathbb{Z}}{\sup}\;2^{-j\beta}\left\|\mathds{1}_{\{2^j\leq |\xi| \leq 2^{j+1}\}}\widehat{\;\vec{\psi}\;}(\cdot)\right\|_{L^{1}}$. Consider now $\vec{\psi}\in E$ and fix $j\in \mathbb{Z}$. By a change of variables, we have the identity
$$\|\mathds{1}_{\{2^j\leq |\xi|\leq 2^{j+1}\}}\widehat{\vec{\psi}}(\cdot)\|_{L^1}=2^{jd} \|\mathds{1}_{\{1\leq |\xi|\leq 2\}}\widehat{\vec{\psi}}(2^j\xi)\|_{L^1},$$
which we rewrite as 
\begin{eqnarray*}
\|\mathds{1}_{\{2^j\leq |\xi|\leq 2^{j+1}\}}\widehat{\vec{\psi}}(\cdot)\|_{L^1}&=&2^{j\beta} \|\mathds{1}_{\{1\leq |\xi| \leq2\}}2^{j(d-\beta)}\widehat{\vec{\psi}}(2^j\xi)\|_{L^1}\\
&=&2^{j\beta} \|\mathds{1}_{\{1\leq |\xi|\leq 2\}}\widehat{\vec{\psi}}_j(\cdot)\|_{L^1},
\end{eqnarray*}
with $\widehat{\vec{\psi}}_j(\xi)=2^{j(d-\beta)}\widehat{\vec{\psi}}(2^j\xi)$. Now, by the first hypothesis we obtain the control 
$$\|\mathds{1}_{\{2^j\leq |\xi|\leq 2^{j+1}\}}\widehat{\vec{\psi}}(\cdot)\|_{L^1}=2^{j\beta} \|\mathds{1}_{\{1\leq |\xi|\leq 2\}}\widehat{\vec{\psi}}_j(\cdot)\|_{L^1}\leq C 2^{j\beta}\|\widecheck{\widehat{\vec{\psi}_j}} \,(\cdot)\|_E,$$
noting that $\widecheck{\widehat{\vec{\psi}_j}}(x)=2^{-j\beta}\psi(2^{-j}x)$, we obtain
$$\|\mathds{1}_{\{2^j\leq |\xi|\leq 2^{j+1}\}}\widehat{\vec{\psi}}(\cdot)\|_{L^1}\leq C 2^{j\beta}\|2^{-j\beta}\vec{\psi}(2^{-j}\cdot)\|_E,$$
and at this point we use the homogeneity hypothesis to write
$$\|\mathds{1}_{\{2^j\leq |\xi|\leq 2^{j+1}\}}\widehat{\vec{\psi}}(\cdot)\|_{L^1}\leq C 2^{j\beta}\|2^{-j\beta}\vec{\psi}(2^{-j}\cdot)\|_E= C 2^{j\beta}\|\vec{\psi}\|_E,$$
from which we obtain the uniform estimate
$$2^{-j\beta}\|\mathds{1}_{\{2^j\leq |\xi|\leq 2^{j+1}\}}\widehat{\vec{\psi}}(\cdot)\|_{L^1}\leq  C\|\vec{\psi}\|_E,$$
and we finally obtain 
$$\|\vec{\psi}\|_{\dot{\mathscr{B}}^{-\beta,\infty}_{L^{1}_\xi}}=\underset{j\in \mathbb{Z}}{\sup}\;2^{-j\beta}\left\|\mathds{1}_{\{2^j\leq |\xi| \leq 2^{j+1}\}}\widehat{\;\vec{\psi}\;}(\cdot)\right\|_{L^{1}}\leq C\|\vec{\psi}\|_E.$$
We will prove now the space inclusion $\dot{\mathscr{B}}^{-\frac{\alpha}{p}, \infty}_{L^{q,\infty}_\xi}(\mathbb{R}^d)\subset \dot{\mathscr{B}}^{-\beta, \infty}_{L^{1}_\xi}(\mathbb{R}^d)$ under the condition $\frac{\alpha}{p}+\frac{d}{q'}=\beta$. For this we first remark that, definition of the space $\dot{\mathscr{B}}^{-\frac{\alpha}{p}, \infty}_{L^{q,\infty}_\xi}(\mathbb{R}^d)$ given in the expression (\ref{NormFBL}), we have
$$\|\vf\|_{\dot{\mathscr{B}}^{-\frac{\alpha}{p}, \infty}_{L^{q,\infty}_\xi}}=\underset{j\in \mathbb{Z}}{\sup}\; 2^{-j\frac{\alpha}{p}}\left\|\mathds{1}_{\{2^j\leq |\xi| \leq 2^{j+1}\}}\widehat{\;\vf \;}(\cdot)\right\|_{L^{q,\infty}},$$
thus, in the particular case when $j=0$, we have 
$$\|\vf\|_{\dot{\mathscr{B}}^{-\frac{\alpha}{p}, \infty}_{L^{q,\infty}_\xi}}=\underset{j\in \mathbb{Z}}{\sup}\; 2^{-j\frac{\alpha}{p}}\left\|\mathds{1}_{\{2^j\leq |\xi| \leq 2^{j+1}\}}\widehat{\;\vf \;}(\cdot)\right\|_{L^{q,\infty}}\geq \left\|\mathds{1}_{\{1\leq |\xi| \leq 2\}}\widehat{\;\vf \;}(\cdot)\right\|_{L^{q,\infty}},$$
noting that $\left\|\mathds{1}_{\{1\leq |\xi| \leq 2\}}\widehat{\;\vf \;}(\cdot)\right\|_{L^{q,\infty}}\geq \left\|\mathds{1}_{\{1\leq |\xi| \leq 2\}}\widehat{\;\vf \;}(\cdot)\right\|_{L^{1}}$ (recall \cite[Proposici\'on 1.2.14]{Chamorro_Lorentz}), we obtain 
$$\|\vf\|_{\dot{\mathscr{B}}^{-\frac{\alpha}{p}, \infty}_{L^{q,\infty}_\xi}}\geq \left\|\mathds{1}_{\{1\leq |\xi| \leq 2\}}\widehat{\;\vf \;}(\cdot)\right\|_{L^{1}},$$
which is the first hypothesis of the Proposition \ref{prop_max_fourier_besov}. Now by homogeneity we have 
$$\|\lambda^\beta f(\lambda \cdot)\|_{\dot{\mathscr{B}}^{-\frac{\alpha}{p}, \infty}_{L^{q,\infty}_\xi}}=\lambda^\beta\| f(\lambda \cdot)\|_{\dot{\mathscr{B}}^{-\frac{\alpha}{p}, \infty}_{L^{q,\infty}_\xi}}=\lambda^{\frac{\alpha}{p}+\frac{d}{q'}}\| f(\lambda \cdot)\|_{\dot{\mathscr{B}}^{-\frac{\alpha}{p}, \infty}_{L^{q,\infty}_\xi}}\simeq \|f\|_{\dot{\mathscr{B}}^{-\frac{\alpha}{p}, \infty}_{L^{q,\infty}_\xi}},$$
which is the second hypothesis of the Proposition \ref{prop_max_fourier_besov} and thus we obtain the wished space inclusion $\dot{\mathscr{B}}^{-\frac{\alpha}{p}, \infty}_{L^{q,\infty}_\xi}(\mathbb{R}^d)\subset \dot{\mathscr{B}}^{-\beta, \infty}_{L^{1}_\xi}(\mathbb{R}^d)$ when  $\frac{\alpha}{p}+\frac{d}{q'}=\beta$, and this ends the proof of this proposition. $\hfill \blacksquare$\\
%%%%%%%%%%%%%%%%%%%%%%%%%%%%%%%%%%%%%%%%%%%%%%%%%%%
\section{Fourier-Herz spaces and parabolic Morrey spaces}\label{Appendix2}
In the Theorem \ref{thm_boussinesq_L1_alpha} above, we used the space $L^{\frac{\alpha}{\alpha-1}, \infty}_t(L^1_\xi)$ as a resolution space for the velocity field $\vu$. In this section we will compare this particular resolution space (based in a Fourier approach) with the parabolic Morrey spaces that were used in \cite{Chamorro_Mansais1} (based in a real variable approach) and we will see that the Fourier-Herz framework considered here can not be compared to the parabolic Morrey setting when studying critical mild solutions for the fractional dissipative Navier-Stokes-Boussinesq system  (\ref{eq_NSBalpha}). To this end, we first recall the definition of the parabolic Morrey spaces: 
\begin{Definition}
For $1\leq \mathfrak{p}\leq \mathfrak{q}<+\infty$ and for $1<\alpha<2$, the parabolic Morrey space $\mathcal{M}_\alpha^{\mathfrak{p},\mathfrak{q}}([0,+\infty[\times \mathbb{R}^d)$ is defined as the space of locally integrable functions $\vec{\psi}:[0,+\infty[\times \mathbb{R}^d\longrightarrow \mathbb{R}^d$ such that the following norm is finite
$$\|\vec{\psi}\|_{\mathcal{M}_\alpha^{\mathfrak{p},\mathfrak{q}}}=\underset{r>0}{\mathrm{sup}} \ \underset{(t,x)\in [0,+\infty[ \times \mathbb{R}^d}{\mathrm{sup}} \ \frac{1}{r^{(d+\alpha)(\frac{1}{\mathfrak{p}}-\frac{1}{\mathfrak{q}})}} \left(\displaystyle{\iint_{\{|t-s|^\frac{1}{\alpha}+|x-y|<r\}}}|\vec{\psi}(s,y)|^\mathfrak{p}dyds \right)^\frac{1}{\mathfrak{p}} <+\infty.$$
\end{Definition}
Morrey spaces are \emph{big} functional spaces in the sense that, under some condition over the indexes, they can contain Lebesgue spaces and Lorentz spaces. In our particular case, we have the following space inclusion: 
\begin{equation}\label{LorentzHerzParaMorrey}
L^{\frac{\alpha}{\alpha-1},\infty}_t([0,+\infty[, L^1_\xi(\mathbb{R}^d))\subset \mathcal{M}_\alpha^{\mathfrak{p},\frac{d+\alpha}{\alpha-1}}([0,+\infty[\times \mathbb{R}^d), \quad \mbox{if}\quad  1\leq \mathfrak{p}<\tfrac{\alpha}{\alpha-1}.
\end{equation}
Indeed, consider a vector field $\vec{\psi}$ such that $\vec{\psi}\in L^{\frac{\alpha}{\alpha-1}, \infty }_t(L^1_\xi)$, we then write
\begin{eqnarray*}
\|\vec{\psi}\|_{\mathcal{M}_\alpha^{\mathfrak{p},\frac{d+\alpha}{\alpha-1}}}&=&\underset{r>0}{\mathrm{sup}} \ \underset{(t,x)\in [0,+\infty[ \times \mathbb{R}^d}{\mathrm{sup}} \ \frac{1}{r^{(d+\alpha)(\frac{1}{\mathfrak{p}}-\frac{\alpha-1}{d+\alpha})}} \left(\displaystyle{\iint_{\{|t-s|^\frac{1}{\alpha}+|x-y|<r\}}}|\vec{\psi}(s,y)|^\mathfrak{p}dyds \right)^\frac{1}{\mathfrak{p}}\\
&\leq &\underset{r>0}{\mathrm{sup}} \ \underset{(t,x)\in [0,+\infty[ \times \mathbb{R}^d}{\mathrm{sup}} \ \frac{1}{r^{(d+\alpha)(\frac{1}{\mathfrak{p}}-\frac{\alpha-1}{d+\alpha})}} \left(\displaystyle{\int_{\{|t-s|<r^\alpha\}}}\|\vec{\psi}(s,\cdot)\|_{L^\infty}^\mathfrak{p} r^d ds \right)^\frac{1}{\mathfrak{p}},
\end{eqnarray*}
and recalling that $\|\vec{\psi}(s,\cdot)\|_{L^\infty}\leq \|\widehat{\vec{\psi}}(s,\cdot)\|_{L^1}=\|\vec{\psi}(s,\cdot)\|_{L^1_\xi}$, we obtain 
$$\|\vec{\psi}\|_{\mathcal{M}_\alpha^{\mathfrak{p},\frac{d+\alpha}{\alpha-1}}}\leq \underset{r>0}{\mathrm{sup}} \ \underset{(t,x)\in [0,+\infty[ \times \mathbb{R}^d}{\mathrm{sup}} \ \frac{r^{\frac{d}{\mathfrak{p}}}}{r^{(d+\alpha)(\frac{1}{\mathfrak{p}}- \frac{\alpha-1}{d+\alpha})}} \left(\int_{\{|t-s|<r^\alpha\}}\|\vec{\psi}(s,\cdot)\|_{L^1_\xi}^\mathfrak{p}  ds \right)^\frac{1}{\mathfrak{p}}.$$
Since we have the control (see \cite[Proposici\'on 1.1.10]{Chamorro_Lorentz})
$$\int_{\{|t-s|<r^{\alpha}\}}\|\vec{\psi}(s,\cdot)\|_{L^1_\xi}^{\mathfrak{p}}ds\leq C (r^\alpha)^{1-\frac{\mathfrak{p}(\alpha-1)}{\alpha}}\|\|\vec{\psi}(s,\cdot)\|_{L^1_\xi}\|_{L^{\frac{\alpha}{\alpha-1},\infty}_t}^\mathfrak{p},$$
as long as $1\leq \mathfrak{p}<\frac{\alpha}{\alpha-1}$, then we have 
 $$\|\vec{\psi}\|_{\mathcal{M}_\alpha^{\mathfrak{p},\frac{d+\alpha}{\alpha-1}}}\leq C \underset{r>0}{\mathrm{sup}} \ \underset{(t,x)\in [0,+\infty[ \times \mathbb{R}^d}{\mathrm{sup}} \ \frac{r^{\frac{d}{\mathfrak{p}}}r^{\frac{\alpha}{\mathfrak{p}}-(\alpha-1)}}{r^{(d+\alpha)(\frac{1}{\mathfrak{p}}-\frac{\alpha-1}{d+\alpha})}}\|\vec{\psi}\|_{L^{\frac{\alpha}{\alpha-1}, \infty}_t(L^1_\xi)},$$
 and we obtain 
$$\|\vec{\psi}\|_{\mathcal{M}_\alpha^{\mathfrak{p},\frac{d+\alpha}{\alpha-1}}}\leq C\|\vec{\psi}\|_{L^{\frac{\alpha}{\alpha-1},\infty}_t(L^1_\xi)},$$
from which we deduce the announced space embedding (\ref{LorentzHerzParaMorrey}) as long as we have the condition $1\leq \mathfrak{p}<\frac{\alpha}{\alpha-1}$.\\
 
With these computations at hand, we are quite tempted to deduce that the conclusion of the Theorem \ref{thm_boussinesq_L1_alpha} stated above is just a particular case of the results obtained in \cite{Chamorro_Mansais1} where we used the parabolic Morrey spaces $\mathcal{M}_\alpha^{\mathfrak{p},\frac{d+\alpha}{\alpha-1}}$. Note however that the embedding (\ref{LorentzHerzParaMorrey}) is valid as long as the parameter $\mathfrak{p}$ is \emph{small} (\emph{i.e.} when $1\leq \mathfrak{p}<\frac{\alpha}{\alpha-1}$) and in order to close the fixed point argument in the critical space $\mathcal{M}_\alpha^{\mathfrak{p},\frac{d+\alpha}{\alpha-1}}$ for the system (\ref{eq_NSBalpha}) we established in \cite{Chamorro_Mansais1} the condition $\frac{3\alpha-2}{\alpha-1}<\mathfrak{p}\leq \frac{d+\alpha}{\alpha-1}$: due to the nonlinear terms of the equation (\ref{eq_NSBalpha}) we know how to construct mild solutions in the space $\mathcal{M}_\alpha^{\mathfrak{p},\frac{d+\alpha}{\alpha-1}}$ if the parameter $\mathfrak{p}$ is \emph{not small} (note that $\frac{\alpha}{\alpha-1}<\frac{3\alpha-2}{\alpha-1}$ since $1<\alpha<2$). \\

This shows that, even though we have a \emph{generic} embedding between the resolution space $L^{\frac{\alpha}{\alpha-1}, \infty}_t(L^1_\xi)$ and the resolution space $\mathcal{M}_\alpha^{\mathfrak{p},\frac{d+\alpha}{\alpha-1}}$, the interval of validity of the inclusion $L^{\frac{\alpha}{\alpha-1}, \infty}_t(L^1_\xi)\subset \mathcal{M}_\alpha^{\mathfrak{p},\frac{d+\alpha}{\alpha-1}}$ does not allow to compare -to the best of our knowledge- these two functional settings when studying mild solutions for the fractional dissipative Navier-Stokes-Boussinesq system (\ref{eq_NSBalpha}). In other words, the Fourier based approach developed here can not be compared to the real variable approach studied in our previous work \cite{Chamorro_Mansais1}. \\

%%%%%%%%%%%%%%%%%%%%%%%%%%%%%%%%%%%%%%%%%%%%%%%%%%%

%%%%%%%%%%%%%%%%%%%%%%%%%%%%%%%%%%%%%%%%%%%%%%%%%%%
\noindent {\bf Statement.} All authors have contributed to the manuscript substantially and have agreed to the final submitted version. This work does not have any conflict of interest.
%%%%%%%%%%%%%%%%%%%%%%%%%%%%%%%%%%%%%%%%%%%%%%%%%%%

\end{document}